\newcommand\reallywidehat[1]{%
\savestack{\tmpbox}{\stretchto{%
  \scaleto{%
    \scalerel*[\widthof{\ensuremath{#1}}]{\kern-.6pt\bigwedge\kern-.6pt}%
    {\rule[-\textheight/2]{1ex}{\textheight}}
  }{\textheight}%
}{0.5ex}}%
\stackon[1pt]{#1}{\tmpbox}
}
\tikzset{
  arrow/.style={-latex, shorten >=1ex, shorten <=1ex}}
\newtheorem*{Theorem*}{Theorem}
\newtheorem{Theorem}{Theorem}
\newtheorem*{Lemma*}{Lemma}
\newtheorem{Lemma}{Lemma}	 
\newtheorem{Corollary}{Corollary}	
\theoremstyle{definition}
\newtheorem{Remark}{Remark}	 
\newtheorem{Definition}{Definition} 
\newcommand{\C}{\mathbb{C}} 
\newcommand{\R}{\mathbb{R}} 
\newcommand{\Z}{\mathbb{Z}} 
\newcommand{\N}{\mathbb{N}} 
\newcommand{\RT}{\operatorname{Re}}
\newcommand{\IT}{\operatorname{Im}}
\newcommand{\iu}{\mathrm{i}}
\newcommand{\eu}{\mathrm{e}}
\newcommand{\eps}{\varepsilon}
\DeclareMathOperator{\Ker}{Ker}
\DeclareMathOperator{\Span}{Span}
\DeclareMathOperator{\Ran}{Ran}
\DeclareMathOperator{\Dom}{Dom}
\DeclareMathOperator{\sech}{sech}
\makeatletter\@addtoreset {equation}{section}\makeatother
\begin{document}

\title[Stability of solitary waves in the LLE]{Stability of solitary wave solutions in the Lugiato-Lefever equation}

\author{Lukas Bengel}
\address{L. Bengel \hfill\break 
Institute for Analysis,\hfill\break
Karlsruhe Institute of Technology (KIT), \hfill\break
D-76128 Karlsruhe, Germany}
\email{lukas.bengel@kit.edu}

\begin{abstract}
We analyze the spectral and dynamical stability of solitary wave solutions to the Lugiato-Lefever equation (LLE) on $\R$. Our interest lies in solutions that arise through bifurcations from the phase-shifted bright soliton of the nonlinear Schr\"odinger equation (NLS). These solutions are highly nonlinear, localized, far-from-equilibrium waves, and are the physical relevant solutions to model Kerr frequency combs. We show that bifurcating solitary waves are spectrally stable when the phase angle satisfies $\theta \in (0,\pi)$, while unstable waves are found for angles $\theta \in (\pi,2\pi)$. Furthermore, we establish orbital asymptotical stability of spectrally stable solitary waves against localized perturbations. Our analysis exploits the Lyapunov-Schmidt reduction method, the instability index count developed for linear Hamiltonian systems, and resolvent estimates.
\end{abstract}

\keywords{Stability, Bifurcation theory, Lugiato-Lefever equation}

\date{\today} 
	
\subjclass[2000]{Primary: 34C23, 35B35; Secondary: 35Q60}


\maketitle

\section{Introduction}
Kerr frequency combs generated in an externally driven Kerr nonlinear microresonator are very promising devices in optical communications or frequency metrology, enabling, for instance, high-speed data transmission of up to 1.44 Tbit/s, cf.~\cite{PfeifleBrasch2014}. They are optical signals consisting of a multitude of equally spaced excited modes in frequency space and are modeled by stable highly localized stationary periodic solutions of the Lugiato-Lefever equation (LLE)
\begin{align}\label{LLE}
	\iu u_t = - d u_{xx} + (\zeta-\iu) u - |u|^2 u   + \iu f, \qquad (x,t) \in \R^2.
\end{align}
Here $u = u(x,t) \in \C$ is the field amplitude in the resonator, $d \not=0$ is the dispersion, $\zeta \in \R$ is the offset between the external forcing frequency and the resonant frequency in the resonator called detuning, and $f\in \R$ describes the pump power inside the resonator of the external forcing. A physical derivation of \eqref{LLE} can be found in \cite{LugiatoLefever1987}. From a mathematical point of view, LLE is a damped and driven nonlinear Schr\"odinger equation (NLS). Motivated by the promising applications of Kerr frequency combs, the existence of stationary solutions of LLE has received considerable attention. A plethora of stationary solutions have been found in numerical simulations
\cite{BarashenkovSmirnov1998,GaertnerTrocha2019,NozakiBekki1986,Parra-Rivas2018,Parra-Rivas2014,
Parra-Rivas2016} or have been constructed analytically
\cite{BarashenkovZemlyanaya1999,GaertnerReichel2020,Godey2017,KaupNewell1978,
MandelReichel2017,MiyajiOhnishi2010}.
The naturally associated question of their spectral as well as their nonlinear stability with respect to different types of perturbations has gained interest recently
\cite{BarashenkovSmirnov1996,DelceyHaragus2018Periodic,DelcyHaragus2018Instab,
FengStanislavova2021,GodeyBalakireva2014,HakkaevStanislavova2019,HaragusJohnson2021Lin,
HaragusJohnson2021Nonlin,HaragusJohnson2023,PerinetVerschueren2017,StanislavovaStefanov2018,
TerronesMcLaughlin1990}.
Whereas nonlinear stability can be obtained under general spectral stability assumptions, spectral stability analyses themselves rely on the specific structure of the solutions and typically employ similar methods as were used to construct them. So far, spectral stability has been obtained for periodic small amplitude solutions of \eqref{LLE} arising through a Turing bifurcation of a homogeneous rest state \cite{DelcyHaragus2018Instab}. The only spectral stability result of far-from-equilibrium solutions of \eqref{LLE} that the author is aware of is that in \cite{HakkaevStanislavova2019}, where solutions to \eqref{LLE} are constructed by bifurcation from the cnoidal wave solutions solving NLS equation on the torus. Stability results for explicitly available solitary wave solutions in the forced NLS equation without damping have been obtained in \cite{Barashenkov1991} and also recently in \cite{FengStanislavova2021}.
Here, we present the first spectral stability result of far-from-equilibrium \emph{soliton solutions} of the damped and driven NLS \eqref{LLE}. The solutions under consideration are highly nonlinear and arise by bifurcation from bright solitons in the NLS equation in the anomalous regime $d>0$. They satisfy the approximation formula
\begin{align}\label{eq:approx_form}
	u(x) \approx u_\infty + \sqrt{2\zeta} \sech\left( \sqrt{\frac{\zeta}{d}} x\right) \eu^{\iu \theta_0}, \qquad x \in \R,
\end{align}
where $u_\infty \in \C$ is a constant background due to the forcing in \eqref{LLE} and the angle $\theta_0$ is found by solving the equation $\cos\theta_0 = 2 \sqrt{2\zeta}/(\pi f)$. 
Although they are non-periodic solutions of \eqref{LLE}, their exponential localization makes them a valid approximation of a frequency comb, cf.~Figure~\ref{fig:Solitons}.
We emphasize that the approximation \eqref{eq:approx_form} is frequently used in the physics literature to approximate Kerr frequency combs \cite{Brasch,Herr,Wabnitz}, which facilitates (formal) computations.

Contributions of this paper are threefold. In Theorem~\ref{thm:ex}, we prove the existence of solitary wave solutions of \eqref{LLE} that verify the approximation \eqref{eq:approx_form}. Therefore we consider the LLE with dispersion rescaled to $d=1$ as a perturbation of the focusing NLS in the following sense:
\begin{align}\label{LLEperp}
	\iu u_t = - u_{xx} + \zeta u - |u|^2 u   + \eps\iu \Psi(u), \qquad (x,t) \in \R^2,
\end{align}
where
$$
	\Psi(u) = -u + f,
$$
and $\eps$ is the bifurcation parameter.
We show that solitary wave solutions $u = u(\eps)$ bifurcate at $\eps = 0$ from the rotated NLS soliton
$$
	\phi_{\theta_0}(x) :=  \sqrt{2 \zeta} \sech(\sqrt{\zeta}x) \eu^{\iu  \theta_0}.
$$

In Theorem~\ref{thm:spec_stab}, we prove spectral stability of the solitary waves satisfying $\sin\theta_0,\eps>0$ and spectral instability for all other sign configurations of $\eps,\sin\theta_0$. This result relies on a detailed analysis of the spectral problem, where the crucial point is to understand the behavior of small eigenvalues for $\eps$ small in the linearized problem.

Finally, in Theorem~\ref{thm:nonlin_stab} we prove nonlinear asymptotic orbital stability of spectrally stable solitary waves. For the linear estimates, we establish high frequency resolvent estimates for families of operators in Hilbert spaces, see Theorem~\ref{thm:ex_bdd_resolvent}, which we then use to prove linear stability. Indeed, the resolvent estimates are needed to overcome the problem that a Spectral Mapping Theorem for the non-sectorial operator arising in the linearized equation is a-priori not available. Nonlinear stability then follows as a corollary of the linear stability result.

\begin{Remark}
Existence of solitary waves bifurcating from the NLS soliton has already been proven in \cite{GasmiDiss} using the Crandall-Rabinowitz Theorem of bifurcation from a simple eigenvalue. Here, we use a different approach based on a Lyapunov-Schmidt reduction in parameter-dependent spaces. This is advantageous because it directly yields an expansion of the solution needed in the spectral stability analysis. Further, we believe that our approach is flexible enough for possible extensions to bifurcation problems with higher dimensional kernels. 
\end{Remark}

\begin{Remark}
There is a long list of literature on persistence and stability of solitary solutions for other variants of the perturbed NLS, cf.~
\cite{AlexeevaBarashenkov1999,BarashenkovZemlyanaya1999PhysReview,
KapitulaSandstede1998,PromislowKutz2000} and the references therein.
\end{Remark}

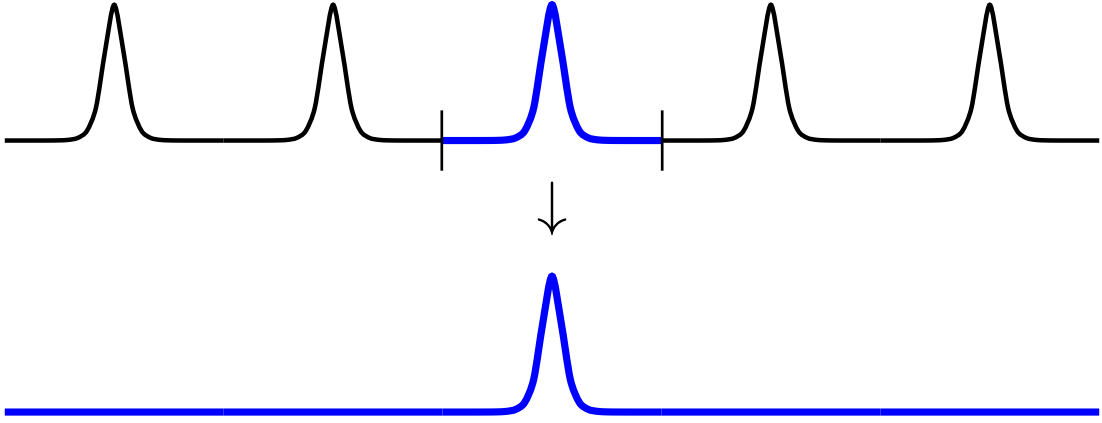
\begin{figure}[t]
\begin{center}
\begin{tikzpicture}
	\draw[scale=.6, domain=-8:8, smooth, variable=\x, blue, line width = 2.7pt] plot ({0.3*\x},{3*1/cosh(\x)*1/cosh(\x)+1});
	\draw[scale=.6, domain=-8:8, smooth, variable=\x, black, line width = 1.7pt] plot ({0.3*(\x-16)},{3*1/cosh(\x )*1/cosh(\x)+1});
	\draw[scale=.6, domain=-8:8, smooth, variable=\x, black, line width = 1.7pt] plot ({0.3*(\x+16)},{3*1/cosh(\x )*1/cosh(\x)+1});
	\draw[scale=.6, domain=-8:8, smooth, variable=\x, black, line width = 1.7pt] plot ({0.3*(\x+32)},{3*1/cosh(\x )*1/cosh(\x)+1});
	\draw[scale=.6, domain=-8:8, smooth, variable=\x, black, line width = 1.7pt] plot ({0.3*(\x-32)},{3*1/cosh(\x )*1/cosh(\x)+1});
	\draw[-, line width = 1pt] (-1.45,1) -- (-1.45,0.2);
	\draw[-, line width = 1pt] (1.45,1) -- (1.45,0.2);
	\draw (0,-.3) node [scale=1.9] {$\downarrow$};
	\draw[scale=.6, domain=-8:8, smooth, variable=\x, blue, line width = 2.7pt] plot ({0.3*\x},{3*1/cosh(\x )*1/cosh(\x)-5});
	\draw[scale=.6, domain=-8:8, smooth, variable=\x, blue, line width = 2.7pt] plot ({0.3*(\x-16)},{-5});
	\draw[scale=.6, domain=-8:8, smooth, variable=\x, blue, line width = 2.7pt] plot ({0.3*(\x-32)},{-5});
	\draw[scale=.6, domain=-8:8, smooth, variable=\x, blue, line width = 2.7pt] plot ({0.3*(\x+16)},{-5});
	\draw[scale=.6, domain=-8:8, smooth, variable=\x, blue, line width = 2.7pt] plot ({0.3*(\x+32)},{-5});
\end{tikzpicture}
\end{center}
\caption{Approximation of a periodic solution on $\R/\Z$ by a solitary wave on $\R$.}
\label{fig:Solitons}
\end{figure}

\subsection{Main results}\label{ssec:main_results}
Solitary wave solutions of the perturbed NLS \eqref{LLEperp} are solutions of the stationary equation
\begin{equation}\label{LLE_perp_stationary}
	 -u'' + \zeta u - |u|^2 u + \iu \eps (-u + f) = 0, \quad x \in \R,
\end{equation}
which decay to a limit state $u_\infty\in \C$ as $|x| \to \infty$.

The following theorem provides the first result of the paper on the existence of solitary waves for a suitable parameter region.

\begin{Theorem}\label{thm:ex}
Let $\zeta, f>0$ be fixed and suppose that $\theta_0 \in \R$ is a simple zero of the function
$$
	\theta \mapsto \pi f \cos\theta - 2\sqrt{2 \zeta}.
$$
Then there exist $\eps^*>0$ and a branch $(-\eps^*,\eps^*) \ni \eps \mapsto u(\eps) \in \C + H^2(\R)$ of solutions to the perturbed problem \eqref{LLE_perp_stationary} bifurcating from the rotated soliton
$$
	\phi_{\theta_0}(x) = \sqrt{2\zeta} \sech \big( \sqrt{\zeta} x \big) \eu^{\iu \theta_0}.
$$
More precisely, the branch is of the form
$$
	u (\eps) = \phi_{\theta(\eps)} + u_\infty(\eps) + \varphi(\eps),\quad u (0) = \phi_{\theta_0},
$$
where
\begin{itemize}
	\item the map $(-\eps^*,\eps^*) \ni\eps \mapsto \theta(\eps) \in \R$ is real-analytic and describes the rotational angle of the soliton,
	\item the map $(-\eps^*,\eps^*) \ni\eps \mapsto u_\infty(\eps) \in \C$ is real-analytic and consists of the constant background of the solution at $\pm \infty$,
	\item the map $(-\eps^*,\eps^*) \ni\eps \mapsto \varphi(\eps)\in H^2(\R)$ is real-analytic and describes a small correction term of order $\mathcal{O}(|\eps|)$. 
\end{itemize}
\end{Theorem}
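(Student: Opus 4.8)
The plan is to reduce the stationary equation \eqref{LLE_perp_stationary} to a scalar bifurcation equation by a Lyapunov--Schmidt procedure adapted to the gauge symmetry of the unperturbed NLS, and to solve that equation with the analytic implicit function theorem. Write $F(u,\eps):=-u''+\zeta u-|u|^2u+\iu\eps(-u+f)$, so that \eqref{LLE_perp_stationary} reads $F(u,\eps)=0$ and $F_0:=F(\cdot,0)$ is the stationary NLS map. Any solution in $\C+H^2(\R)$ tends to a limit $u_\infty$ solving the algebraic equation $\zeta u_\infty-|u_\infty|^2u_\infty+\iu\eps(-u_\infty+f)=0$; as its linearization at $(u_\infty,\eps)=(0,0)$ is multiplication by $\zeta\neq0$, the analytic implicit function theorem on $\C\cong\R^2$ yields a real-analytic branch $u_\infty(\eps)$ with $u_\infty(0)=0$ and $u_\infty'(0)=-\iu f/\zeta$. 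Substituting $u=u_\infty(\eps)+v$ turns the problem into $\tilde F(v,\eps):=F(u_\infty(\eps)+v,\eps)=0$, and the limit equation guarantees $\tilde F(v,\eps)\in L^2(\R)$ whenever $v\in H^2(\R)$. Since $\Psi$ preserves evenness, I would pose the problem on the even subspace $H^2_e(\R)$, which discards the odd translational mode $\phi_{\theta_0}'$ and makes the bifurcation genuinely simple.

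The linearization of $F_0$ at the real soliton $\phi_0=\sqrt{2\zeta}\,\sech(\sqrt\zeta\,\cdot)$ acts on the real and imaginary parts through the self-adjoint operators $L_+=-\partial_x^2+\zeta-3\phi_0^2$ and $L_-=-\partial_x^2+\zeta-\phi_0^2$. On the even subspace $L_+$ is invertible while $\Ker L_-=\Span\{\phi_0\}$, so the kernel reduces to the one-dimensional real space $\Span\{\iu\phi_0\}$; as the essential spectrum lies in $[\zeta,\infty)$, the linearization $L_0$ is Fredholm of index zero with range $(\Ker L_0)^\perp$. To freeze this kernel as the angle changes I would use the equivariance $F_0(\eu^{\iu\theta}u)=\eu^{\iu\theta}F_0(u)$ and conjugate, setting $v=\eu^{\iu\theta}(\phi_0+\psi)$ and $G(\psi,\theta,\eps):=\eu^{-\iu\theta}\tilde F(\eu^{\iu\theta}(\phi_0+\psi),\eps)$; this is the parameter-dependent splitting advertised in the introduction. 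Constraining $\psi\perp\iu\phi_0$, the equation $(I-P)G=0$ has the invertible $\psi$-derivative $L_0|_{(\Ker L_0)^\perp}$ at the origin, so the analytic implicit function theorem furnishes a real-analytic $\psi(\theta,\eps)$ with $\psi(\theta,0)\equiv0$.

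The surviving scalar equation is $\gamma(\theta,\eps):=\langle G(\psi(\theta,\eps),\theta,\eps),\iu\phi_0\rangle=0$ with $\langle a,b\rangle=\RT\int_\R a\bar b\,\mathrm dx$. Because each $\phi_\theta$ solves the unperturbed NLS, $\gamma(\theta,0)\equiv0$, hence $\gamma=\eps\,\eta$ with $\eta$ real-analytic; and since $\iu\phi_0$ is annihilated by the fixed operator $L_0$ for every $\theta$, the $\partial_\eps\psi$ contribution drops out and $\eta(\theta,0)$ reduces to the Melnikov integral $\langle\partial_\eps\tilde F(\phi_\theta,0),\iu\phi_\theta\rangle$. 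Inserting $u_\infty'(0)=-\iu f/\zeta$ and using $\int_\R\sech^2=2$ and $\int_\R\sech^3=\tfrac\pi2$, this integral evaluates to $\sqrt2\,(\pi f\cos\theta-2\sqrt{2\zeta})$. The hypothesis now enters decisively: $\theta_0$ being a zero gives $\eta(\theta_0,0)=0$, while simplicity gives $\partial_\theta\eta(\theta_0,0)=-\sqrt2\,\pi f\sin\theta_0\neq0$, which is exactly the transversality needed to solve $\eta(\theta,\eps)=0$ for a real-analytic $\theta(\eps)$ with $\theta(0)=\theta_0$. Setting $\varphi(\eps):=\eu^{\iu\theta(\eps)}\psi(\theta(\eps),\eps)=\mathcal O(|\eps|)$ then assembles the branch $u(\eps)=\phi_{\theta(\eps)}+u_\infty(\eps)+\varphi(\eps)$, with real-analyticity inherited from each implicit-function step.

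The step I expect to be most delicate is arranging the reduction so that the scalar bifurcation function is exactly the clean Melnikov integral, with its transversality controlled precisely by the simple-zero condition. A naive Lyapunov--Schmidt with a fixed, $\theta$-independent complement produces spurious terms $\langle\partial_\eps\psi,(\partial_\theta L_\theta)\iu\phi_{\theta_0}\rangle$ in $\partial_\theta\eta(\theta_0,0)$ that would obscure the transversality; freezing the kernel by the gauge conjugation is what eliminates them. Getting this bookkeeping right, together with the evenness restriction that secures a one-dimensional kernel and the explicit $\sech^3$ integral that exposes the factor $\pi f\cos\theta-2\sqrt{2\zeta}$, is the technical core of the argument.
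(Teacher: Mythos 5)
Your proposal is correct and follows essentially the same route as the paper: peel off the real-analytic background branch $u_\infty(\eps)$, perform a Lyapunov--Schmidt reduction on the even subspace where the kernel is the one-dimensional space spanned by the rotated $\iu\phi_0$, divide the reduced scalar equation by $\eps$, and apply the analytic implicit function theorem, with the Melnikov value $\sqrt2\,(\pi f\cos\theta-2\sqrt{2\zeta})$ and transversality $\mp\sqrt{2}\pi f\sin\theta_0\neq0$ matching the paper's Lemma~\ref{lem:formula_for_f} up to an overall sign coming from your convention of keeping the nonlinearity on the left rather than writing $L_\theta\varphi=N$. The only difference is bookkeeping: you freeze the kernel at $\Span\{\iu\phi_0\}$ via the gauge conjugation $v=\eu^{\iu\theta}(\phi_0+\psi)$ with a fixed projection, whereas the paper keeps the ansatz $u=\phi_\theta+u_\infty+\varphi$ and lets the projection $P_\theta$ rotate with $\theta$; these are equivalent under the change of variables $\varphi=\eu^{\iu\theta}\psi$, since the real scalar product is invariant under simultaneous rotation, so $P_\theta(\eu^{\iu\theta}\psi)=\eu^{\iu\theta}P_0\psi$.
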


\begin{Remark}
In Theorem~\ref{thm:ex}, a necessary and sufficient condition on the parameters $\zeta,f$ to find simple zeros is $\pi^2 f^2 > 8 \zeta$, which yields an existence region in the $\zeta$-$f$-plane, already obtained analytically or numerically in \cite{BarashenkovSmirnov1996,GaertnerReichel2020,GasmiDiss,Wabnitz}. Furthermore, it should be noted that solutions for negative forcing parameters $f<0$ are obtained through the transformation $u \mapsto - u$.
\end{Remark}

Theorem~\ref{thm:spec_stab} and Theorem~\ref{thm:nonlin_stab} below provide the two main results on the spectral and nonlinear stability of solitary waves of Theorem~\ref{thm:ex} against localized perturbations in $H^1$. 

Let $u = u(\eps) \in \C + H^2(\R)$ be a solution of the stationary LLE as in Theorem~\ref{thm:ex} for sufficiently small $\eps\not=0$. Expanding the solution as $\psi(x,t) = u(x) + v(x,t)$ results in the perturbation equation
\begin{align*}
	\iu v_t = - v_{xx} + \zeta v - 2 |u|^2 v - u^2 \bar{v} - \iu \eps v - 2 |v|^2 u - v^2 \bar{u} - |v|^2 v.
\end{align*}
The evolution of the perturbation $v$ is coupled with the evolution of the complex conjugate $\bar v$ so that we obtain the system:
\begin{align}\label{eq:dyn_pert}
	\left\{
	\begin{array}{rl}
		\iu v_t &= - v_{xx} + \zeta v - 2 |u|^2 v - u^2 \bar{v} - \iu \eps v - 2 |v|^2 u - v^2 \bar{u} - |v|^2 v, \\
	-\iu \bar v_t &= - \bar v_{xx} + \zeta\bar v - 2 |u|^2 \bar v -  \bar u^2 v + \iu \eps\bar v - 2 |v|^2\bar u - \bar v^2 u - |v|^2 \bar v,
	\end{array}\right.
\end{align}
for which the mild formulation is locally well-posed\footnote{This follows from standard semigroup theory.} in $(H^1(\R))^2$. The linearized equation is then given by
$$
	V_t
	= (\mathcal{L}-\eps)
	V, \quad V = (v_1,v_2)^T, 
$$
for the operator $\mathcal{L}:=J L: (H^2(\R))^2 \to (L^2(\R))^2$, with
\begin{align}\label{def_lin_op}
	J :=
	\begin{pmatrix}
		- \iu & 0 \\
		0 & \iu
	\end{pmatrix}, \quad L:=
	\begin{pmatrix}
		- \partial_x^2 + \zeta - 2 |u|^2 & - u^2 \\
		-\bar{u}^2 & - \partial_x^2 + \zeta - 2 |u|^2
	\end{pmatrix},
\end{align}
and the associated eigenvalue problem reads
\begin{align}\label{eq:spec_stab_prob}
	\lambda V = (\mathcal{L}-\eps) V, \quad V = (v_1,v_2)^T.
\end{align}
Spectral stability is now determined by the location of the spectrum of the linearized operator $\sigma(\mathcal{L}-\eps) = \sigma(\mathcal{L})-\eps$ according to the following definition.
\begin{Definition}
A solution $u = u(\eps) \in \C + H^2(\R)$ of \eqref{LLE_perp_stationary} is called \emph{spectrally unstable}, if there exists $\lambda \in \sigma(\mathcal{L}-\eps)$ such that $\RT(\lambda)>0$. Otherwise the solution is called \emph{spectrally stable}, i.e., if and only if $\sigma(\mathcal{L}) \subset\{z \in \C:\ \RT z \leq \eps\}$.
\end{Definition}

The following theorem clarifies the spectral stability of the solitary wave solutions found in Theorem~\ref{thm:ex}.

\begin{Theorem}\label{thm:spec_stab}
Suppose that $u = u(\eps) \in \C + H^2(\R)$ is a solution of \eqref{LLE_perp_stationary} as in Theorem~\ref{thm:ex} for $\eps\not=0$ sufficiently small, that bifurcates from the NLS soliton
$$
	\phi_{\theta_0}(x) = \sqrt{2\zeta} \sech \big( \sqrt{\zeta} x \big) \eu^{\iu \theta_0},
$$
with $\sin\theta_0 \not = 0$. Then, the spectrum of $\mathcal{L}-\eps$ is given by the disjoint union of essential and discrete spectrum $\sigma(\mathcal{L}-\eps) = \sigma_\textup{ess}(\mathcal{L}-\eps) \cup \sigma_d(\mathcal{L}-\eps)$, where the essential spectrum is explicitly computable:
$$
	\sigma_\textup{ess}(\mathcal{L}-\eps) = \left\{ \iu \omega \in \iu\R: \ |\omega| \in [\zeta_\eps,\infty) \right\}-\eps, \quad \zeta_\eps = \zeta + \mathcal{O}(\eps^2).
$$
Moreover,
\begin{enumerate}[(i)]
	\item if $\eps<0$, then the solution $u$ is spectrally unstable. The same is true if $\eps,-\sin\theta_0>0$.
	\item if $\eps,\sin\theta_0>0$, then the solution $u(\eps)$ is spectrally stable and the spectrum satisfies 
	$$
		\sigma(\mathcal{L}-\eps) \subset \{-2 \eps\} \cup \{z \in \C:\ \RT z = -\eps\} \cup \{0\}
	$$
	with (algebraically) simple eigenvalues $\lambda = 0,-2\eps$.
\end{enumerate}
\end{Theorem}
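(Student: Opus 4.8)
The plan is to treat the essential spectrum and the discrete spectrum near the origin separately, since the latter carries all the stability information, and then to rule out any additional spectrum in the right half-plane for small $\eps$.

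\emph{Essential spectrum.} Since $u(\eps)\to u_\infty(\eps)$ exponentially as $|x|\to\infty$ with $|u_\infty(\eps)|=\mathcal{O}(|\eps|)$, the coefficients of $L$ converge exponentially to constants, so by Weyl's theorem $\sigma_\textup{ess}(\mathcal{L}-\eps)=\sigma_\textup{ess}(\mathcal{L}_\infty-\eps)$, where $\mathcal{L}_\infty=JL_\infty$ is the constant-coefficient asymptotic operator. Conjugating with the Fourier transform turns $\mathcal{L}_\infty$, for each $k\in\R$, into a $2\times2$ matrix with vanishing trace and determinant $a(k)^2-|u_\infty|^4>0$, where $a(k)=k^2+\zeta-2|u_\infty|^2$. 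Hence its eigenvalues are $\pm\iu\omega(k)$ with $\omega(k)^2=a(k)^2-|u_\infty|^4$; minimising over $k$ at $k=0$ gives $\zeta_\eps=\zeta+\mathcal{O}(\eps^2)$ and, after the shift by $-\eps$, the stated formula. In particular, for $\eps<0$ the essential spectrum lies in $\{\RT z=-\eps>0\}$, which already proves the first instability assertion in (i).

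\emph{The unperturbed cluster at the origin.} At $\eps=0$ the operator $\mathcal{L}(0)=JL(\phi_{\theta_0})$ is the linearisation of the cubic \textup{NLS} about its soliton. Its generalised kernel is four-dimensional and decomposes into two Jordan chains of length two: an \emph{odd} chain generated by the translation mode $\partial_x\phi_{\theta_0}$ (with a boost-type generalised eigenvector), and an \emph{even} chain generated by the phase mode $\iu\phi_{\theta_0}$ (with a scaling-type generalised eigenvector). Because the profile $u(\eps)$ may be chosen even in $x$, the operator $\mathcal{L}(\eps)-\eps$ commutes with the parity involution, so these two chains stay decoupled and may be handled as separate $2\times2$ reduced problems. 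Outside this cluster $\sigma(\mathcal{L}(0))$ equals its purely imaginary essential spectrum, with no embedded or additional discrete eigenvalues; this follows from the explicit (P\"oschl--Teller) solvability of the scalar operators $L_\pm$, or from the Hamiltonian instability index count, which yields $k_r=k_c=k_i^-=0$ since $n(L)=n(D)=1$ for the \textup{NLS} soliton. Two structural facts pin down the odd chain for all small $\eps$: differentiating the profile equation in $x$ shows $(u',\bar u')^T\in\Ker(\mathcal{L}(\eps)-\eps)$, so $0\in\sigma(\mathcal{L}(\eps)-\eps)$; and the relation $(JL)^\ast=-LJ$ together with the similarity of $JL$ and $LJ$ forces $\sigma(\mathcal{L}(\eps))$ to be symmetric about the imaginary axis, whence $-2\eps\in\sigma(\mathcal{L}(\eps)-\eps)$ as well. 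As the odd $2\times2$ block carries exactly two eigenvalues near the origin, they must be $\lambda=0,-2\eps$, which are distinct and hence algebraically simple for $\eps\neq0$.

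\emph{The even chain and the role of $\sin\theta_0$.} It remains to locate the pair coming from the phase/scaling chain. Here I would set up the Lyapunov--Schmidt reduction of \eqref{eq:spec_stab_prob} onto the even generalised eigenspace, using the expansion of $u(\eps)$ from Theorem~\ref{thm:ex}, and compute the leading (lower-left, Melnikov-type) entry $\eps c$ of the perturbed Jordan block. The broken phase invariance enters precisely through the derivative of the bifurcation function of Theorem~\ref{thm:ex} at its simple zero, $\partial_\theta(\pi f\cos\theta-2\sqrt{2\zeta})\big|_{\theta_0}=-\pi f\sin\theta_0\neq0$, which up to a positive factor equals $c$. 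Consequently the block opens at order $\sqrt{\eps}$, onto the imaginary axis when $\eps c<0$ and onto the real axis when $\eps c>0$. Thus for $\eps,\sin\theta_0>0$ the pair is purely imaginary and lies in $\{\RT z=-\eps\}$ after the shift, giving spectral stability, whereas for $\eps>0$ and $\sin\theta_0<0$ the pair is real and one eigenvalue crosses into $\{\RT z>0\}$, giving the remaining instability in (i). Finally, a perturbation argument using the spectral gap of width $\zeta_\eps$ around the cluster, the reflection symmetry, and the resolvent bounds of Theorem~\ref{thm:ex_bdd_resolvent} shows that for $|\eps|$ small no further spectrum escapes into $\{\RT z>0\}$, so the four cluster eigenvalues govern stability completely.

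\emph{Main obstacle.} The decisive and most delicate step is the reduction of the even chain: one must compute the leading-order reduced $2\times2$ matrix explicitly, which requires the second-order terms in the expansion of $u(\eps)$ and several integrals against the \textup{NLS} soliton, and verify that its eigenvalues emerge at order $\sqrt{\eps}$ with the real/imaginary dichotomy governed by $\sgn(\eps\sin\theta_0)$. Additional care is needed because these eigenvalues approach the tip of the shifted essential spectrum on the line $\{\RT z=-\eps\}$, so the spectral projection underlying the reduction must be constructed so as to capture them without interference from the continuous spectrum.
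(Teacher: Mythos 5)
Your outline agrees with the paper up to a point: the essential spectrum via Weyl's theorem and the constant-coefficient symbol, the four-dimensional cluster at the origin, the identification of $0,-2\eps$ through the translation mode plus the symmetry $\lambda\mapsto-\bar\lambda$ of $\sigma(JL)$, and the Puiseux-type splitting of the phase chain at order $\sqrt{\eps}$ all match Section~\ref{sec:spec_stability}. But there are two genuine gaps. First, the decisive sign computation is asserted rather than proven: you claim the reduced coefficient $c$ equals, ``up to a positive factor,'' the derivative $\partial_\theta(\pi f\cos\theta-2\sqrt{2\zeta})|_{\theta_0}$ of the bifurcation function. The paper's Lemma~\ref{lem:formula_ev} shows this is true, but only after a nontrivial computation in which the first-order corrector $\varphi_1=L_{\theta_0}^{-1}\bigl(2|\phi_{\theta_0}|^2\partial_\eps u_\infty(0)+\phi_{\theta_0}^2\partial_\eps\bar u_\infty(0)+\iu\phi_{\theta_0}\bigr)$ contributes $\tfrac{3f}{2\zeta}\|\phi_0\|_{L^3}^3\sin\theta_0$ to $\RT\int_\R|u_0|^2u_0\bar u_1\,dx$, \emph{reversing the sign} of the naive background-only term $-\tfrac{f}{\zeta}\|\phi_0\|_{L^3}^3\sin\theta_0$. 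So the proportionality you invoke cannot be read off from Theorem~\ref{thm:ex}; it is exactly the content of the solvability condition \eqref{eq:formula_ev}, $\lambda_1^2\langle J^{-1}L_0^{-1}J^{-1}V_0,V_0\rangle=\langle L_1V_0,V_0\rangle$, and must be computed (it yields $\lambda_1^2=-\pi f\sqrt{2\zeta}\sin\theta_0$).

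Second, and more seriously, your mechanism for excluding further unstable spectrum would not close. The resolvent bounds of Theorem~\ref{thm:ex_bdd_resolvent} control only $|\IT\lambda|\gg1$ with $\RT\lambda\neq0$, and a continuity-in-$\eps$ argument does not control eigenvalues \emph{bifurcating out of the essential spectrum}: at $\eps=0$ the linearized cubic NLS has resonances at the band edges $\pm\iu\zeta$, which is precisely the scenario in which edge bifurcations can produce new discrete eigenvalues under perturbation, and the region $\{0\le\RT\lambda\le\eps\}$ abutting the essential spectrum is not reachable by your spectral-gap argument. Moreover, even granting persistence, leading-order imaginarity of $\lambda_\pm$ only gives $\RT\lambda_\pm=\mathcal{O}(\eps)$, which is insufficient for the exact inclusion $\sigma(\mathcal{L}-\eps)\subset\{-2\eps\}\cup\{\RT z=-\eps\}\cup\{0\}$ in (ii). The paper resolves both issues at once with a \emph{global} Hamiltonian--Krein instability index count \eqref{eq:counting_formula} applied to the real-coefficient transformed pair $\tilde J\tilde L$ for $\eps>0$: Lemma~\ref{lem:index_calc} shows $n(\tilde L)=3$, $k_r\ge1$ (the translation eigenvalue $\eps$), and $k_i^-(\lambda_\pm)=1$ each, verified via $\langle L_+^{-1}\phi_0,\phi_0\rangle=-\langle\partial_\zeta\phi_0,\phi_0\rangle<0$; hence $k_r=1$, $k_i^-=2$, $k_c=0$, so $\lambda=\eps$ is the \emph{only} spectrum of $\tilde J\tilde L$ in the open right half-plane, and by the symmetry $\lambda\mapsto-\bar\lambda$ the pair $\lambda_\pm$ sits exactly on $\iu\R$. (Your side remarks are mostly sound: the parity decoupling of the two Jordan chains is legitimate since $u(\eps)$ is even, and the unperturbed NLS count $n(L)=n(D)=1$ is correct; but these do not substitute for the $\eps>0$ index count, which also requires the observation that $\Ker(\tilde L)=\{0\}$ for $\eps\neq0$, established in Lemma~\ref{lem:formula_ev}.)
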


The different stability configurations of Theorem~\ref{thm:spec_stab} are depicted in Figure~\ref{fig1}.

\begin{figure}[t]
\begin{tikzpicture}
\node at (0,3.5){\includegraphics[width=0.23\columnwidth]{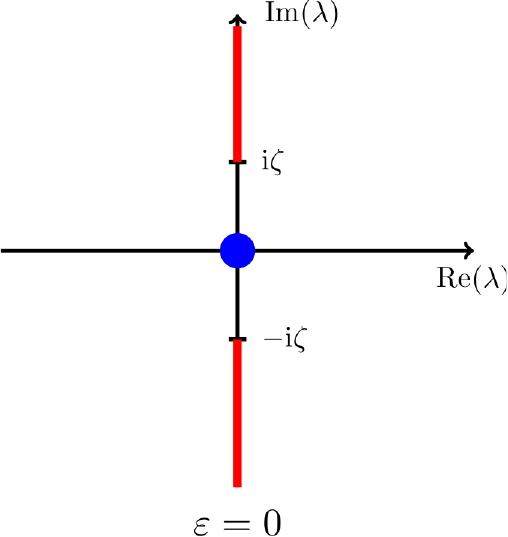}};
\node  at (0,-1) {\includegraphics[width=0.23\columnwidth]{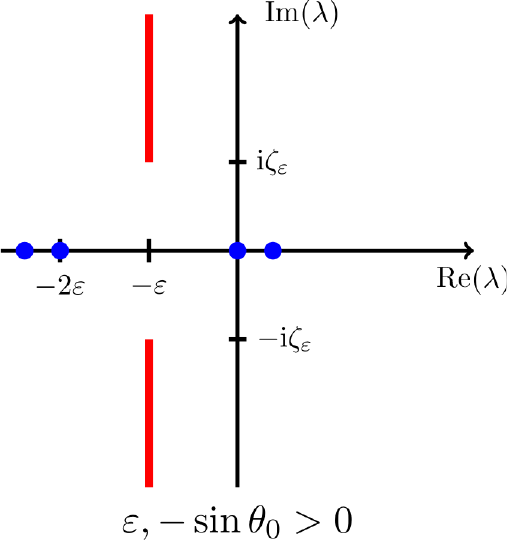}};
\node  at (-5.5,1.5) {\includegraphics[width=0.23\columnwidth]{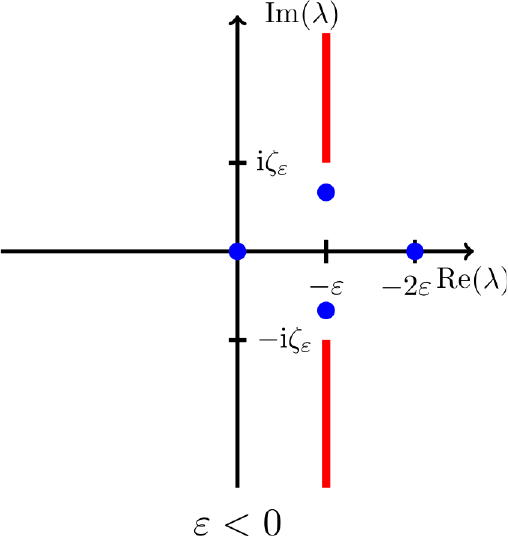}};
\node  at (5.5,1.5) {\includegraphics[width=0.23\columnwidth]{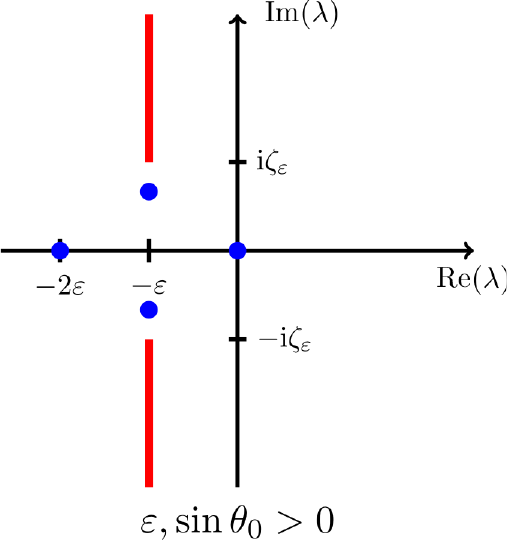}};
\draw [->,draw=black,line width = .8pt] (1.5,4.5) to[bend left=20] (4,3.5);
\draw [->,draw=black,line width = .8pt] (-1.5,4.5) to[bend right=20] (-4,3.5);
\draw [->,draw=black,line width = .8pt] (1.5,2.5) to[bend left=10] (1.5,1);
\end{tikzpicture}
\caption{Stability configurations of Theorem~\ref{thm:spec_stab}. Blue dots = discrete spectrum, red lines = essential spectrum. Top: spectrum of the unperturbed stable NLS soliton. Left and bottom: spectrum of unstable solitary waves of LLE. Right: spectrum of a stable solitary wave of LLE.}
\label{fig1}
\end{figure}

\begin{Remark}
In case (i) of Theorem~\ref{thm:spec_stab}, the instability is triggered by two different mechanism. If $\eps<0$, the essential spectrum of $\mathcal{L}-\eps$ is unstable. If $\eps>0>\sin\theta_0$, we find exactly one simple real unstable eigenvalue $\lambda_+ \in \sigma_d(\mathcal{L})$ of order $\mathcal{O}(\eps^{1/2})$. 
\end{Remark}

\begin{Remark}
A similar spectral stability result for periodic solutions of \eqref{LLEperp} bifurcating from cnoidal wave solutions of the NLS has been obtained in \cite{StanislavovaStefanov2018}. Moreover, in \cite{Barashenkov1991,FengStanislavova2021} stability and instability of purely imaginary soliton solutions of the forced NLS equation is proven. Here, the stable solutions have a strictly positive imaginary part, which is in agreement with our sign condition on $\sin\theta_0$.
\end{Remark}

The next theorem, provides the nonlinear stability result for spectrally stable solutions of Therorem~\ref{thm:spec_stab}.

\begin{Theorem}\label{thm:nonlin_stab}
Suppose that $u = u(\eps) \in \C + H^2(\R)$ is a spectrally stable solution as in Theorem~\ref{thm:spec_stab}. Then, the solution $u(\eps)$ is asymptotically orbitally stable.  More precisely, there exist constants $\delta,\eta,C>0$ such that for all $v_0 \in H^1(\R)$ satisfying $\| v_0\|_{H^1}< \delta$ there exists the unique global (mild) solution $(v,\bar{v}) \in C([0,\infty),(H^1(\R))^2)$, $(v(0),\bar{v}(0)) =(v_0,\bar{v}_0)$ of \eqref{eq:dyn_pert} and $\sigma_\infty \in \R$ such that with $\psi = u + v$ we have
$$
	\|\psi(\cdot,t) - u(\cdot - \sigma_\infty)\|_{H^1} \leq C \delta \eu^{-\eta t} \qquad \text{for $t \geq 0$.}
$$
\end{Theorem}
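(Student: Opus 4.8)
The plan is to reduce the nonlinear statement to a linear decay estimate for the semigroup generated by $\mathcal{L}-\eps$ on the spectral complement of its neutral mode, and then to close a nonlinear bootstrap. Throughout we work in the spectrally stable regime $\eps,\sin\theta_0>0$ of Theorem~\ref{thm:spec_stab}, so that the only spectrum of $\mathcal{L}-\eps$ in $\{\RT z > -\eps\}$ is the algebraically simple eigenvalue $\lambda=0$, whose eigenfunction is the translation mode $(\partial_x u, \partial_x \bar u)^T$ arising from the translation invariance of \eqref{LLE_perp_stationary}. Let $P_0$ denote the rank-one Riesz spectral projection onto this eigenspace and set $P = I - P_0$.

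First I would establish linear exponential decay on $\Ran(P)$: there exist $\eta \in (0,\eps)$ and $C>0$ with
\[
  \bigl\| \eu^{t(\mathcal{L}-\eps)} P \bigr\|_{(H^1)^2 \to (H^1)^2} \leq C \eu^{-\eta t}, \qquad t \geq 0.
\]
Since $\mathcal{L}-\eps = JL-\eps$ is not sectorial, the spectral information of Theorem~\ref{thm:spec_stab} does not by itself yield semigroup decay, and this is the crux of the argument. I would invoke the Gearhart--Pr\"uss theorem on the Hilbert space $(H^1(\R))^2$: it suffices to verify that the half-plane $\{\RT\lambda \geq -\eta\}$ lies in the resolvent set of $(\mathcal{L}-\eps)|_{\Ran P}$ and that the resolvent is uniformly bounded there. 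Splitting into frequency regimes, for $\lambda$ in any bounded subset of $\{\RT\lambda \geq -\eta\}$ the bound follows from analyticity of the resolvent, since once $\eta<\eps$ this compact region contains no spectrum; for $|\IT\lambda| \to \infty$ the uniform bound is precisely the content of the high-frequency resolvent estimate of Theorem~\ref{thm:ex_bdd_resolvent}. This verification of the Gearhart--Pr\"uss hypothesis is the main obstacle, and it is where the non-sectoriality of the operator is overcome.

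Next I would set up the modulation. Exploiting translation invariance, I write the solution in a moving frame as $\psi(x,t) = u(x-\sigma(t)) + v(x-\sigma(t),t)$ and fix the modulation parameter $\sigma(t)$ by imposing the orthogonality condition $P_0 (v,\bar v)^T = 0$, i.e. $(v,\bar v)^T \in \Ran(P)$ for all $t$; the implicit function theorem guarantees that this decomposition is well defined and unique as long as $\|v\|_{H^1}$ stays small. Differentiating the constraint yields a modulation equation of the form $\dot\sigma = \mathcal{N}(v,\sigma)$ with $\mathcal{N}$ quadratic in $v$ to leading order, while $v$ solves $v_t = (\mathcal{L}-\eps)v + \dot\sigma\,\partial_x(u+v) + N(v)$, where $N$ collects the genuinely nonlinear terms from \eqref{eq:dyn_pert}.

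Finally I would close the estimate by a continuity argument. Because $H^1(\R)$ embeds into $L^\infty(\R)$ and is a Banach algebra, the nonlinearity obeys $\|N(v)\|_{H^1} \lesssim \|v\|_{H^1}^2$ for small $v$. Applying the projection $P$, using Duhamel's formula together with the linear decay above, and absorbing the $\dot\sigma$ terms (which are themselves quadratic in $v$), I obtain the a priori estimate $\|v(t)\|_{H^1} \leq C\delta\,\eu^{-\eta t}$ on any interval where $\|v\|_{H^1}$ remains below the smallness threshold; a standard continuation argument then shows this holds for all $t\geq 0$ provided $\delta$ is small enough. Integrating the modulation equation gives $|\dot\sigma(t)| \lesssim \delta^2 \eu^{-2\eta t}$, hence $\sigma(t) \to \sigma_\infty$ with exponential rate, and combining the two decay statements yields $\|\psi(\cdot,t) - u(\cdot-\sigma_\infty)\|_{H^1} \leq C\delta\,\eu^{-\eta t}$, as claimed.
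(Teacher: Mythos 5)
Your overall strategy coincides with the paper's: isolate the translation mode with the Riesz projection $P_0$, prove exponential decay of the semigroup on $\Ker(P_0)$ via the Pr\"uss (Gearhart--Pr\"uss) theorem combined with the high-frequency resolvent estimates of Theorem~\ref{thm:ex_bdd_resolvent}, and close the nonlinear argument by modulation and Duhamel (the paper outsources exactly the bootstrap you sketch to Theorem~4.3.5 of Kapitula--Promislow). However, your linear step has a genuine gap: you invoke Gearhart--Pr\"uss on $(H^1(\R))^2$ and assert that the high-frequency bound there ``is precisely the content'' of Theorem~\ref{thm:ex_bdd_resolvent}. That theorem hypothesizes that $A_\pm$ are self-adjoint on the underlying Hilbert space $H$; this holds for $H=L^2(\R)$ (the paper's choice, with $A_\pm=-\partial_x^2+\zeta-2|u|^2$, $B=-u^2$), but fails for $H=H^1(\R)$: multiplication by the real potential $\zeta-2|u|^2$ is not symmetric with respect to the $H^1$ inner product, since integration by parts leaves the commutator term $\int_\R (2|u|^2)'\,(f\bar g'-f'\bar g)\,dx$, which does not vanish in general. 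Consequently Theorem~\ref{thm:ex_bdd_resolvent} only delivers an $L^2\to L^2$ resolvent bound, not the $(H^1)^2$ bound your application of Gearhart--Pr\"uss requires. The paper bridges precisely this gap in Step~2 of Lemma~\ref{lem:lin_exp_stability}: from the uniform $L^2$ bound it derives a uniform $H^2$ bound by commuting $(JL-\eps-\lambda)^{-1}(I-P_0)$ with $JL-\eps$ and using the elliptic equivalence $\|V\|_{H^2}\leq c\|(L+J\eps+\gamma)V\|_{L^2}$, and then interpolates between $L^2$ and $H^2$ to obtain the $H^1$ estimate. Note that you cannot simply run the whole scheme in $L^2$ instead, because your nonlinear bootstrap relies on the algebra property of $H^1$ and the embedding $H^1\hookrightarrow L^\infty$, so $H^1$ semigroup decay is indispensable.

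A second, smaller omission: covering the half-plane $\{\RT\lambda\geq-\eta\}$ by ``bounded subsets'' plus the regime $|\IT\lambda|\to\infty$ misses the unbounded strip where $\RT\lambda\to+\infty$ with $|\IT\lambda|$ bounded; analyticity of the resolvent gives no uniform bound there. The paper disposes of it with the Hille--Yosida estimate $\sup_{\RT\lambda\geq\gamma_1}\|(\mathcal{L}-\eps-\lambda)^{-1}\|_{L^2\to L^2}\leq C'$. Both gaps are fixable by exactly the paper's arguments, and the modulation, continuation, and convergence $\sigma(t)\to\sigma_\infty$ in your last two paragraphs are sound and correspond to what the cited nonlinear stability theorem provides.
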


\begin{Remark}
In \cite{StanislavovaStefanov2018} asymptotic orbital stability of spectrally stable periodic solutions against co-periodic perturbations is proven. Theorem~\ref{thm:nonlin_stab} can be seen as an analogous version of this result for stability of solitary wave solutions on $\R$ against localized perturbations.
\end{Remark}

\begin{Remark}
If we restrict perturbations to the class of even functions the asymptotical orbital stability can be improved to asymptotic stability by exploiting a spectral gap in the linear stability problem.
\end{Remark}

\subsection{Outline of the paper}
In Section~\ref{sec:existence}, we show that solitary waves for the LLE bifurcate from the NLS soliton as stated in Theorem~\ref{thm:ex}.
The spectral stability problem is analyzed in Section~\ref{sec:spec_stability} and the proof of Theorem~\ref{thm:spec_stab} is presented.
The asymptotic orbital stability result of Theorem~\ref{thm:nonlin_stab} is proven in Section~\ref{sec:proof_nonlin_stab}. It relies upon uniform resolvent estimates for the linearized LLE, which we derive from high frequency resolvent estimates. These estimates are obtained in an abstract functional analytical set-up and can therefore also be applied to other NLS type equations.

\section{Existence of solitary wave solutions}\label{sec:existence}

The goal of this section is to prove Theorem~\ref{thm:ex}. We work in the Sobolev spaces $H^k(\R) = H^k(\R,\C)$, $k \in \N_0$ of complex valued functions \emph{over the field} $\R$. By this choice of function spaces, the map $H^2(\R) \ni u \mapsto |u|^2 u \in H^2(\R)$ is Fr\'echet differentiable. Moreover, the $L^2$ scalar-product is defined by the \emph{$\R$-valued map}
$$
	\forall f,g \in L^2(\R,\C): \quad\langle f, g \rangle_{L^2} = \RT \int_\R f \bar{g} dx.
$$
Let us fix the parameters $\zeta,f>0$ such that $2 \sqrt{2 \zeta} < \pi f$ and let $\theta_0 \in \R$ be a simple zero of $\theta \mapsto \pi f\cos \theta - 2\sqrt{2\zeta}$. Recall that $\phi_0(x) = \sqrt{2 \zeta} \sech \big( \sqrt{\zeta} x \big)$ denotes the soliton solution of the defocusing cubic NLS equation
$$
	-\phi'' + \zeta \phi - |\phi|^2 \phi = 0,
$$
which decays exponentially to zero as $|x| \to \infty$. Let us introduce the manifold of rotated solitons
$$
	\mathcal{M} := \Big\{\phi_\theta = \phi_0 \eu^{\iu \theta} : \theta \in \R \Big\}.
$$
By the gauge invariance of NLS every $\phi \in \mathcal{M}$ is a solution of the NLS. In addition, NLS possesses a translational symmetry, i.e., $\phi(\cdot-\sigma)$ is a solution of NLS for all shifts $\sigma \in \R$. In the Lugiato-Lefever equation, the gauge symmetry is broken, and only the translational symmetry persists. Consequently, the continuation for $\eps \not=0$ is expected to be successful only for suitably rotated solitons $\phi \in \mathcal{M}$. To handle the translational symmetry, we fix the shift parameter $\sigma = 0$ and restrict our analysis to the spaces $H_\text{ev}^2(\R),L_\text{ev}^2(\R)$ of even functions.

It is now important to note that the presence of the forcing term $f$ in \eqref{LLE_perp_stationary} prevents solutions from decaying to zero as $|x| \to \infty$. More precisely, the tails at $\pm \infty$ of every localized solution satisfy the algebraic equation
\begin{align}\label{eq:background}
	\zeta u_\infty - |u_\infty|^2 u_\infty + \iu\eps\big( - u_\infty + f) = 0, \quad u_\infty \in \C.
\end{align}
Thus, we adapt an ansatz of the form
$$
	u = \phi_\theta + u_\infty + \varphi,
$$
where $\phi_\theta \in \mathcal{M}$ is a suitably rotated soliton, $u_\infty \in \C$ is the constant background solving the algebraic equation \eqref{eq:background}, and $\varphi \in H_\text{ev}^2(\R)$ is a small correction. For the background we solve ~\eqref{eq:background} to leading order by
\begin{align}\label{eq:expansion_background}
	u_\infty(\eps) = -\frac{\iu f}{\zeta} \eps + \mathcal{O}(\eps^2),
\end{align}
which is the unique solution\footnote{There are two additional solutions to \eqref{eq:background} given by $u_\infty(\eps) = \pm \sqrt{\zeta}+\mathcal{O}(\eps)$, which are not considered here.} close to $0$. Inserting our ansatz into \eqref{LLE_perp_stationary} yields the equation for the correction $\varphi$ and rotational angle $\theta$:
\begin{align}\label{eq:correction}
	L_\theta \varphi = N(\varphi, \eps, \theta)
\end{align}
where
\begin{align*}
	L_\theta \varphi := -\varphi'' + \zeta \varphi - 2 |\phi_\theta|^2 \varphi - \phi_\theta^2 \bar\varphi, \qquad \varphi \in H_\text{ev}^2(\R),
\end{align*}
and
\begin{align*}
	N(\varphi, \eps, \theta) := & 2 |u_\infty(\eps) + \varphi|^2 \phi_\theta  +(u_\infty(\eps) + \varphi)^2 \bar\phi_\theta + |u_\infty(\eps) + \varphi|^2 (u_\infty(\eps) + \varphi) \\
	&- |u_\infty(\eps)|^2 u_\infty(\eps)
	+2 |\phi_\theta|^2 u_\infty(\eps) + \phi_\theta^2 \bar u_\infty(\eps) 
	+ \iu \eps (\phi_\theta + \varphi).
\end{align*}
Note that we have $N(\varphi,\eps,\theta) \in H_\text{ev}^2(\R)$ for $\varphi \in H_\text{ev}^2(\R)$. Our goal is to solve \eqref{eq:correction} by means of the Lyapunov-Schmidt reduction method. Therefore, let us collect relevant properties of the linearized operator $L_\theta$.

\begin{Lemma}\label{lem:ker(L)}
For every $\theta \in \R$ the $\R$-linear operator $L_\theta: H^2(\R) \to L^2(\R)$ is self-adjoint and Fredholm of index zero. Moreover we have
$$
	\Ker(L_\theta) = \Span\{ \iu \phi_\theta , \phi_\theta' \}.
$$
\end{Lemma}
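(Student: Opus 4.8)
The plan is to realize $L_\theta$ as a bounded symmetric perturbation of the free operator, and to reduce the kernel computation to the unrotated case $\theta=0$ by a gauge conjugation, where the problem decouples into two classical scalar Schr\"odinger operators.

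For self-adjointness I would write $L_\theta = A + B_\theta$, where $A := -\partial_x^2 + \zeta$ is self-adjoint on $\Dom(A)=H^2(\R)$ (this carries over from the complex Hermitian setting to the real inner product $\langle\cdot,\cdot\rangle_{L^2}$ since the latter is its real part and $A$ is $\C$-linear), and $B_\theta\varphi := -2|\phi_\theta|^2\varphi - \phi_\theta^2\bar\varphi$ is bounded on $L^2(\R)$ because $\phi_0$ is bounded. The only point requiring care is that $B_\theta$ is \emph{symmetric} for the real inner product. Multiplication by the real potential $-2|\phi_\theta|^2$ is obviously symmetric, and for the conjugation term the identity $\RT z = \RT\bar z$ gives
\[
	\langle \phi_\theta^2\bar\varphi,\psi\rangle_{L^2} = \RT\int_\R\phi_\theta^2\bar\varphi\,\bar\psi\,dx = \RT\int_\R\bar\phi_\theta^2\,\varphi\psi\,dx = \langle\varphi,\phi_\theta^2\bar\psi\rangle_{L^2}.
\]
Kato--Rellich then yields self-adjointness of $L_\theta$ on $H^2(\R)$.

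For the Fredholm property I would use that $\phi_0(x)=\sqrt{2\zeta}\sech(\sqrt{\zeta}x)$ decays exponentially, so the coefficients of $B_\theta$ vanish at $\pm\infty$; hence $B_\theta\colon H^2(\R)\to L^2(\R)$ is compact (multiplication by a function decaying to $0$ is $A$-compact, and $\varphi\mapsto\bar\varphi$ is an isometry of $H^2$). Weyl's theorem on stability of the essential spectrum gives $\sigma_{\mathrm{ess}}(L_\theta)=\sigma_{\mathrm{ess}}(A)=[\zeta,\infty)$, and since $\zeta>0$ the point $0$ lies outside it, so $L_\theta$ is Fredholm. A self-adjoint Fredholm operator automatically has index zero, because $\Ran(L_\theta)^\perp=\Ker(L_\theta^*)=\Ker(L_\theta)$.

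For the kernel, let $R_\theta$ denote multiplication by $\eu^{\iu\theta}$, an $\R$-linear isomorphism of $H^2(\R)$ and $L^2(\R)$. Using $\overline{\eu^{\iu\theta}\psi}=\eu^{-\iu\theta}\bar\psi$ one checks the intertwining relation $L_\theta=R_\theta L_0 R_\theta^{-1}$, whence $\Ker(L_\theta)=\eu^{\iu\theta}\Ker(L_0)$ and it suffices to treat $\theta=0$. Writing $\varphi=p+\iu q$ with real $p,q$, the operator $L_0$ decouples into the two real scalar operators $L_+ := -\partial_x^2+\zeta-3\phi_0^2$ and $L_- := -\partial_x^2+\zeta-\phi_0^2$ acting on real and imaginary parts. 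The NLS profile equation $-\phi_0''+\zeta\phi_0-\phi_0^3=0$ gives $\phi_0\in\Ker(L_-)$, and differentiating it in $x$ gives $\phi_0'\in\Ker(L_+)$; equivalently $\iu\phi_0,\phi_0'\in\Ker(L_0)$, and these are $\R$-linearly independent (one purely imaginary, one real). The remaining, and main, obstacle is the matching upper bound: for a scalar second-order operator with potential converging to $\zeta>0$, the solutions of $L_\pm y=0$ that are square-integrable near $+\infty$ form the one-dimensional subdominant branch $\sim\eu^{-\sqrt{\zeta}x}$, so any $L^2(\R)$ kernel element lies there and $\dim\Ker(L_\pm)\le 1$. (Equivalently one may invoke Sturm oscillation theory: $\phi_0>0$ is the nodeless ground state of $L_-$, so $0$ is its simple lowest eigenvalue, while $\phi_0'$ has a single zero, so $0$ is the simple second eigenvalue of $L_+$.) Hence $\dim\Ker(L_0)=2$, giving $\Ker(L_0)=\Span\{\phi_0',\iu\phi_0\}$ and therefore $\Ker(L_\theta)=\eu^{\iu\theta}\Span\{\phi_0',\iu\phi_0\}=\Span\{\phi_\theta',\iu\phi_\theta\}$.
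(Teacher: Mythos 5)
Your proposal is correct and takes essentially the same route as the paper: self-adjointness via Kato--Rellich applied to the bounded symmetric perturbation $B_\theta$ of $-\partial_x^2+\zeta$, the Fredholm property of index zero via Weyl's theorem and $0\notin\sigma_{\mathrm{ess}}(L_\theta)=[\zeta,\infty)$, and the kernel via the similarity (gauge) transformation $L_\theta=R_\theta L_0 R_\theta^{-1}$. The only difference is that where the paper simply cites \cite{KapProm} for the explicit kernel of the linearized NLS operator, you supply that standard computation yourself --- decoupling $L_0$ into $L_\pm$ on real and imaginary parts, producing $\phi_0'\in\Ker(L_+)$ and $\phi_0\in\Ker(L_-)$ from the profile equation and its $x$-derivative, and bounding $\dim\Ker(L_\pm)\le 1$ by the subdominant-solution (or Sturm oscillation) argument --- which is a correct filling-in of that citation.
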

\begin{proof}
The fact that $L_\theta$ is Fredholm of index zero follows from $0 \not \in \sigma_\text{ess}(L_\theta) = \sigma_\text{ess}(-\partial_x^2 + \zeta) = [\zeta,\infty)$ and the first equality is a consequence of Weyl's~Theorem. Moreover the operator
$ -\partial_x^2: H^2(\R) \to L^2(\R)$ is self-adjoint and hence the same holds for $L_\theta$ since it is a symmetric bounded perturbation of $-\partial_x^2:H^2(\R)\to L^2(\R)$. Finally, the identity for the kernel follows from a similarity transformation and explicit formulas for the kernel of the linearized NLS operator, cf.~\cite{KapProm}.
\end{proof}

Since we aim to solve \eqref{eq:correction} in the space of even functions, we note that the restricted operator $L_\theta|_{H_\text{ev}^2}$ has a one-dimensional kernel. Indeed, by Lemma~\ref{lem:ker(L)} the kernel is explicitly given by $\Ker(L_\theta|_{H_\text{ev}^2}) =  \Span\{ \iu\phi_\theta \}$. Lyapunov-Schmidt reduction now relies on the decomposition of $L_\text{ev}^2$ with the orthogonal projection onto $\Ker(L_\theta|_{H_\text{ev}^2})$ defined by
$$
	P_\theta \varphi : = \frac{\langle \iu\phi_\theta , \varphi \rangle_{L^2}}{\|\phi_0\|_{L^2}^2} \iu\phi_\theta, \qquad \varphi \in H_\text{ev}^2(\R).
$$
The operator $P_\theta$, allows us to split the equation \eqref{eq:correction} into a singular and non-singular part:
\begin{align}
	(I-P_\theta) L_\theta(I-P_\theta) \varphi &= (I-P_\theta) N(\varphi,\eps,\theta) \label{eq:LS_nonsing}\\
	P_\theta N(\varphi,\eps,\theta) &= 0 \label{eq:LS_sing}
\end{align}
where we additionally impose the phase-condition:
\begin{align}\label{eq:phase_cond}
	P_\theta \varphi = 0.
\end{align}
Note that the condition \eqref{eq:phase_cond} depends on the free rotational parameter $\theta$ which means that our decomposition of $L_\text{ev}^2$ is parameter dependent:
$$
	L_\text{ev}^2(\R) = \Ker(P_\theta) \oplus \Ran(P_\theta).
$$
In the following lemma we solve the non-singular equation \eqref{eq:LS_nonsing} subject to the phase-condition \eqref{eq:phase_cond}.

\begin{Lemma}
\label{lem:nonsing}
There exist open neighborhoods $U \subset \R^2$ of $(0,\theta_0)$, $V \subset H_\textup{ev}^2(\R)$ of $0$ and an real-analytic map $U \ni (\eps,\theta) \mapsto \varphi(\eps,\theta) \in V$ such that $\varphi(\eps,\theta)$ solves \eqref{eq:LS_nonsing} subject to the phase condition \eqref{eq:phase_cond} and $\varphi(0,\theta) = 0$ for all $(0,\theta) \in U$.
\end{Lemma}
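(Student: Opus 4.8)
The plan is to recast the non-singular equation \eqref{eq:LS_nonsing} together with the phase condition \eqref{eq:phase_cond} as a single equation $G(\eps,\theta,\varphi)=0$ to which the real-analytic implicit function theorem applies, thereby avoiding the difficulty that the splitting $L^2_\text{ev} = \Ker(P_\theta)\oplus\Ran(P_\theta)$ varies with $\theta$. Concretely, I would define
$$
	G(\eps,\theta,\varphi) := (I-P_\theta)\big[L_\theta\varphi - N(\varphi,\eps,\theta)\big] + P_\theta\varphi
$$
on a neighborhood of $(0,\theta_0,0)$ in $\R\times\R\times H_\text{ev}^2(\R)$ with values in $L_\text{ev}^2(\R)$. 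Since $P_\theta$ is the orthogonal projection onto $\Ran(P_\theta)$, the ranges of $P_\theta$ and $I-P_\theta$ are complementary, so $G(\eps,\theta,\varphi)=0$ holds precisely when both $P_\theta\varphi=0$ and $(I-P_\theta)[L_\theta\varphi-N(\varphi,\eps,\theta)]=0$; under the phase condition $(I-P_\theta)\varphi=\varphi$, so the latter reproduces exactly \eqref{eq:LS_nonsing}.

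Next I would verify that $G$ is real-analytic, which rests on three ingredients. First, the maps $\theta\mapsto\phi_\theta=\phi_0\eu^{\iu\theta}$, $\theta\mapsto L_\theta$ and $\theta\mapsto P_\theta$ are real-analytic: indeed $|\phi_\theta|^2=\phi_0^2$ is $\theta$-independent, while $\phi_\theta^2=\phi_0^2\eu^{2\iu\theta}$ and the coefficient functional $\langle\iu\phi_\theta,\cdot\rangle_{L^2}\,\iu\phi_\theta$ depend analytically on $\theta$. Second, the background $\eps\mapsto u_\infty(\eps)$ is real-analytic by the finite-dimensional analytic implicit function theorem applied to \eqref{eq:background}, with expansion \eqref{eq:expansion_background}. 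Third, $N$ is a fixed polynomial expression in $\varphi,\bar\varphi,u_\infty(\eps),\phi_\theta,\bar\phi_\theta$ and $\eps$, so its analyticity follows from the Banach-algebra property of $H^2(\R)$; the cancellation of the constant-in-$x$ contributions (engineered by letting $u_\infty$ solve \eqref{eq:background}) ensures $N(\varphi,\eps,\theta)\in H_\text{ev}^2\subset L_\text{ev}^2$, so $G$ is well defined.

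The heart of the argument is the linearization at the base point. Because $u_\infty(0)=0$, the nonlinearity reduces at $\eps=0$ to $N(\varphi,0,\theta)=2|\varphi|^2\phi_\theta+\varphi^2\bar\phi_\theta+|\varphi|^2\varphi$, which is purely quadratic and cubic in $\varphi$; hence $N(0,0,\theta)=0$ and $D_\varphi N(0,0,\theta)=0$ for every $\theta$. This gives $G(0,\theta,0)=0$ near $\theta_0$ together with
$$
	D_\varphi G(0,\theta_0,0)\psi = (I-P_{\theta_0})L_{\theta_0}\psi + P_{\theta_0}\psi .
$$
Writing $\psi=\psi_0+\psi_1$ with $\psi_0=P_{\theta_0}\psi\in\Span\{\iu\phi_{\theta_0}\}$ and $\psi_1=(I-P_{\theta_0})\psi$, Lemma~\ref{lem:ker(L)} gives $L_{\theta_0}\psi_0=0$ and, by self-adjointness and the Fredholm property, $\Ran(L_{\theta_0}|_{H_\text{ev}^2})=\Ran(I-P_{\theta_0})$, with $L_{\theta_0}$ restricting to a bijection of $\Ran(I-P_{\theta_0})\cap H_\text{ev}^2$ onto $\Ran(I-P_{\theta_0})$. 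Consequently $D_\varphi G(0,\theta_0,0)$ acts as the identity on $\Ran(P_{\theta_0})$ and as this bijection on $\Ran(I-P_{\theta_0})$, hence is an isomorphism $H_\text{ev}^2\to L_\text{ev}^2$.

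Finally, the analytic implicit function theorem yields neighborhoods $U\ni(0,\theta_0)$, $V\ni 0$ and a unique real-analytic map $(\eps,\theta)\mapsto\varphi(\eps,\theta)\in V$ with $G(\eps,\theta,\varphi(\eps,\theta))=0$, i.e.\ solving \eqref{eq:LS_nonsing} under \eqref{eq:phase_cond}. Since $G(0,\theta,0)=0$ for all $\theta$ near $\theta_0$, local uniqueness forces $\varphi(0,\theta)=0$, as claimed. The only genuinely delicate point is the parameter dependence of $P_\theta$: rather than tracking the moving subspaces $\Ker(P_\theta)$, the augmentation by $+\,P_\theta\varphi$ turns the reduced problem into an honest invertible equation on the \emph{fixed} space $H_\text{ev}^2$, and one must confirm that $\theta\mapsto P_\theta$ and the induced splitting depend analytically on $\theta$ so that the isomorphism established at $\theta_0$ is what the theorem needs.
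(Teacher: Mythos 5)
Your proposal is correct and takes essentially the same route as the paper: the paper defines the same augmented map $F(\varphi,\eps,\theta)=(I-P_\theta)L_\theta(I-P_\theta)\varphi-(I-P_\theta)N(\varphi,\eps,\theta)+P_\theta\varphi$ (identical to your $G$, since $L_\theta(I-P_\theta)=L_\theta$ on $H^2_{\text{ev}}$ because $\Ran(P_\theta)=\Ker(L_\theta|_{H^2_{\text{ev}}})$), applies the analytic implicit function theorem at the same base point, and deduces $\varphi(0,\theta)=0$ from local uniqueness exactly as you do. Your write-up simply makes explicit what the paper leaves implicit, namely the equivalence of $G=0$ with \eqref{eq:LS_nonsing}--\eqref{eq:phase_cond}, the analyticity of $\theta\mapsto P_\theta$ and $\eps\mapsto u_\infty(\eps)$, and the verification that the linearization is an isomorphism via the Fredholm/self-adjointness properties of Lemma~\ref{lem:ker(L)}.
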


\begin{proof}
Define the function $F: H_\text{ev}^2(\R) \times \R \times \R \to  L_\text{ev}^2(\R)$ given by
$$
	F(\varphi,\eps,\theta):= (I-P_\theta) L_\theta(I-P_\theta) \varphi- (I-P_\theta) N(\varphi,\eps,\theta) + P_\theta \varphi.
$$
Then, $F$ is real-analytic in $(\varphi,\eps,\theta)$ as a composition of real-analytic functions (cf.~\cite{Buffoni_Toland} Theorem~4.5.7), $F(0,0,\theta_0) = 0$ and
$$
	\partial_{\varphi}F(0,0,\theta_0) = (I-P_{\theta_0}) L_\theta(I-P_{\theta_0}) + P_{\theta_0}:H_\text{ev}^2(\R) \to L_\text{ev}^2(\R)
$$
is a homeomorphism by construction. The Implicit Function Theorem for analytic functions (\cite{Buffoni_Toland} Theorem~4.5.4) yields the existence of open neighborhoods $U \subset \R^2$ of $(0,\theta_0)$, $V \subset H_\text{ev}^2(\R)$ of $0$ and a real-analytic map $U \ni (\eps,\theta) \mapsto \varphi(\eps,\theta) \in V$ such that the unique solution of $F(\varphi,\eps,\theta) = 0$ in $V \times U$ is given by $(\varphi, \eps, \theta) = (\varphi(\eps,\theta), \eps, \theta)$. Finally, since $F(0,0,\theta) = 0$, we find $\varphi(0,\theta) = 0$ by the local uniqueness of the solution.
\end{proof}
Substitution of the solution obtained in Lemma~\ref{lem:nonsing} into \eqref{eq:LS_sing} amounts to
$$
	P_\theta N(\varphi(\eps,\theta),\eps,\theta) = 0 \quad\Longleftrightarrow\quad
	f(\eps,\theta):=\langle N(\varphi(\eps,\theta),\eps,\theta), \iu\phi_\theta \rangle_{L^2} = 0,
$$
where $f: U \subset \R^2 \to \R$ is again real-analytic as a composition of real-analytic functions and admits the expansion
$$
	f(\eps,\theta) = \langle \eps\iu \phi_\theta + 2 |\phi_\theta|^2 u_\infty(\eps) + \phi_\theta^2  \bar u_\infty(\eps)
	, \iu\phi_\theta \rangle_{L^2} 
	+ \mathcal{O}(\eps^2).
$$
Clearly, $f(0,\theta) = 0$ for all $(0,\theta) \in U$ and thus we find a real-analytic function $\tilde{f}$ with $f(\eps,\theta) = \eps \tilde{f}(\eps,\theta)$. Nontrivial solutions of $f(\eps,\theta) = 0$ then satisfy $\tilde{f}(\eps,\theta) = 0$ and the equation is again solved by the Implicit Function Theorem. Indeed, in the subsequent Lemma~\ref{lem:formula_for_f} we show $\tilde{f}(0,\theta_0) = 0$ and $\partial_\theta\tilde{f}(0,\theta_0) \not = 0$ and thus there exist open intervals $(-\eps^*,\eps^*)$, $\Theta \subset \R$, $\theta_0 \in \Theta$ and a real-analytic branch $(-\eps^*,\eps^*) \ni\eps \mapsto\theta(\eps) \in \Theta$ such that $\tilde{f}(\eps,\theta) = 0$ in $(-\eps^*,\eps^*) \times \Theta$ is uniquely solved by $(\eps,\theta(\eps)) = (\eps,\theta)$. In summary we have constructed a solitary wave solution 
$$
	u (\eps) = \phi_{\theta(\eps)} + u_\infty(\eps) + \varphi(\eps,\theta(\eps)) \in \C + H^2(\R), \quad\eps \in (-\eps^*,\eps^*)
$$
of the Lugiato-Lefever equation \eqref{LLE_perp_stationary}. It remains to prove Lemma~\ref{lem:formula_for_f}.

\begin{Lemma}\label{lem:formula_for_f}
Let $\theta_0 \in \R$ be a simple zero of $\theta \mapsto \pi f\cos \theta - 2\sqrt{2\zeta}$. Then $\tilde{f}(0,\theta_0) = 0$ and $\partial_\theta\tilde{f}(0,\theta_0) \not = 0$.
\end{Lemma}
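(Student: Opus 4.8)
The plan is to recognize $\tilde{f}(0,\theta)$ as the leading-order coefficient in $\eps$ of $f(\eps,\theta)$. Since $f(0,\theta)=0$ for all $\theta$, the factorization $f(\eps,\theta)=\eps\tilde{f}(\eps,\theta)$ gives $\tilde{f}(0,\theta)=\partial_\eps f(0,\theta)=\lim_{\eps\to 0}\eps^{-1}f(\eps,\theta)$, so I only need the order-$\eps$ part of the expansion already recorded above. The key structural observation is that all three contributions there are genuinely of order $\eps$: the first term carries an explicit factor $\eps$, while the background expansion $u_\infty(\eps)=-\iu f\eps/\zeta+\mathcal{O}(\eps^2)$ renders the two remaining terms linear in $\eps$ as well.

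First I would insert $\phi_\theta=\phi_0\,\eu^{\iu\theta}$ (with $\phi_0$ real) and $u_\infty(\eps)=-\iu f\eps/\zeta$ into the three inner products and evaluate them with the $\R$-valued product $\langle g,h\rangle_{L^2}=\RT\int_\R g\bar h\,dx$. A short computation gives $\langle\eps\iu\phi_\theta,\iu\phi_\theta\rangle_{L^2}=\eps\|\phi_0\|_{L^2}^2$, while, after extracting the phase factors $\eu^{\pm\iu\theta}$ and taking real parts, the two background terms contribute $-\tfrac{f}{\zeta}\eps\cos\theta\int_\R 2\phi_0^3\,dx$ and $+\tfrac{f}{\zeta}\eps\cos\theta\int_\R\phi_0^3\,dx$. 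Writing $B:=\int_\R\phi_0^3\,dx$ and summing, the $-2B$ and $+B$ combine to $-B$, so that
\[
	\tilde{f}(0,\theta)=\|\phi_0\|_{L^2}^2-\frac{Bf}{\zeta}\cos\theta .
\]

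Next I would evaluate the two integrals by the substitution $y=\sqrt{\zeta}x$ together with the standard values $\int_\R\sech^2=2$ and $\int_\R\sech^3=\pi/2$, obtaining $\|\phi_0\|_{L^2}^2=4\sqrt{\zeta}$ and $B=\sqrt{2}\pi\zeta$, hence $Bf/\zeta=\sqrt{2}\pi f$. This produces the clean identity
\[
	\tilde{f}(0,\theta)=4\sqrt{\zeta}-\sqrt{2}\pi f\cos\theta=-\sqrt{2}\big(\pi f\cos\theta-2\sqrt{2\zeta}\big),
\]
which exhibits $\tilde{f}(0,\cdot)$ as a nonzero constant multiple of the very function whose simple zero defines $\theta_0$. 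The conclusion is then immediate: $\tilde{f}(0,\theta_0)=0$ because $\theta_0$ is a zero, and $\partial_\theta\tilde{f}(0,\theta_0)=\sqrt{2}\pi f\sin\theta_0$, which is nonzero precisely because the zero is simple, i.e. $-\pi f\sin\theta_0\neq 0$.

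The only delicate point is the bookkeeping of the complex inner products under the real-part convention: one must carefully track the phases $\eu^{\pm\iu\theta}$ and the factors of $\iu$ arising both from $\iu\phi_\theta$ and from $u_\infty(\eps)$, since it is the interference of these phases with $\cos\theta$ — and the cancellation of the $\sin\theta$ contributions at this order — that yields the recognizable form above. Everything else reduces to the two elementary $\sech$-integrals.
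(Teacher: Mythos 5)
Your proof is correct and takes essentially the same route as the paper: both insert $u_\infty(\eps)=-\iu f\eps/\zeta+\mathcal{O}(\eps^2)$ into the recorded order-$\eps$ expansion of $f$, evaluate the three inner products under the real-part convention (where the $\sin\theta$ contributions cancel and the $2B$ and $-B$ terms combine), and reduce everything to $\|\phi_0\|_{L^2}^2=4\sqrt{\zeta}$ and $\int_\R\phi_0^3\,dx=\sqrt{2}\pi\zeta$. The only cosmetic difference is that you keep $\theta$ general and exhibit $\tilde{f}(0,\theta)=-\sqrt{2}\bigl(\pi f\cos\theta-2\sqrt{2\zeta}\bigr)$ as a constant multiple of the defining function, which yields both conclusions at once, whereas the paper evaluates at $\theta_0$ and handles $\partial_\theta\tilde{f}(0,\theta_0)=\sqrt{2}\pi f\sin\theta_0$ by a ``similar computation.''
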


\begin{proof}
By definition of $\tilde{f}$, formula \eqref{eq:expansion_background}, and since $\pi f\cos\theta_0 = 2 \sqrt{2\zeta}$, we have
\begin{align*}
	\tilde{f}(0,\theta_0) &=  \langle \iu \phi_{\theta_0} + 2 |\phi_{\theta_0}|^2 \partial_\eps u_\infty(0) + \phi_{\theta_0}^2  \partial_\eps\bar u_\infty(0)
	, \iu\phi_{\theta_0} \rangle_{L^2} \\
	&= \RT \int_\R |\phi_0|^2 - 2 \iu |\phi_0|^2 \bar \phi_{\theta_0} \partial_\eps u_\infty(0) - \iu |\phi_0|^2 \phi_{\theta_0} \partial_\eps\bar u_\infty(0)  dx \\
	&= 4 \sqrt{\zeta} - \frac{f}{\zeta} \RT \int_\R 2 |\phi_0|^2 \bar \phi_{\theta_0} -  |\phi_0|^2 \phi_{\theta_0}  dx \\
	&= 4 \sqrt{\zeta} - \frac{f}{\zeta} \cos\theta_0 \int_\R \phi_0^3 dx  \\
	&= 4  \sqrt{\zeta} - \sqrt{2}\pi f \cos\theta_0 =  0
\end{align*}
and similar computations lead to
\begin{align*}
	\partial_\theta\tilde{f}(0,\theta_0) = \sqrt{2} \pi f \sin\theta_0 \not = 0,
\end{align*}
where $\sin\theta_0\not =0$ by simplicity of the root $\theta_0$, which proves the statement.
\end{proof}

\begin{Corollary}\label{cor:exp_conv}
The solutions $u (\eps) \in \C + H^2(\R)$ of Theorem~\ref{thm:ex} decay exponentially fast to their limit states $u_\infty(\eps) \in \C$ as $|x| \to \infty$.
\end{Corollary}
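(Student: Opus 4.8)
The plan is to reduce everything to the exponential decay of $w := u(\eps) - u_\infty(\eps)$. By Theorem~\ref{thm:ex} we have $w = \phi_{\theta(\eps)} + \varphi(\eps,\theta(\eps))$, where the rotated soliton $\phi_{\theta(\eps)}$ already decays exponentially and $\varphi(\eps,\theta(\eps)) \in H^2(\R)$. From the Sobolev embedding $H^2(\R) \hookrightarrow C^1(\R)$, together with the fact that $H^2$-functions and their first derivatives vanish at infinity, we get $w(x), w'(x) \to 0$ as $|x| \to \infty$; thus $u(\eps)$ does converge to the background $u_\infty(\eps)$, and it only remains to upgrade this to \emph{exponential} convergence.

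First I would derive the autonomous equation satisfied by $w$. Subtracting the algebraic background equation \eqref{eq:background} from the stationary equation \eqref{LLE_perp_stationary} and inserting $u = u_\infty + w$ yields
\begin{equation*}
	-w'' + \zeta w - \iu \eps w - \big( |u_\infty + w|^2 (u_\infty + w) - |u_\infty|^2 u_\infty \big) = 0 .
\end{equation*}
Splitting the cubic term into its part linear in $(w,\bar w)$ and a remainder $Q(w) = \mathcal{O}(|w|^2)$, this reads $-w'' + (\zeta - 2|u_\infty|^2) w - u_\infty^2 \bar w - \iu\eps w = Q(w)$. Writing $W = (w,\bar w)^T$ and $Y = (W,W')^T$, I recast this as a first-order autonomous system $Y' = \mathcal{F}(Y)$ with $\mathcal{F}(0) = 0$, whose linearization at the equilibrium $Y = 0$ is $Y' = \mathcal{B}Y$ with
\begin{equation*}
	\mathcal{B} = \begin{pmatrix} 0 & I \\ M_\infty & 0 \end{pmatrix}, \qquad
	M_\infty = \begin{pmatrix} \zeta - 2|u_\infty|^2 - \iu\eps & -u_\infty^2 \\ -\bar u_\infty^2 & \zeta - 2|u_\infty|^2 + \iu\eps \end{pmatrix}.
\end{equation*}

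The crucial step is to verify that the equilibrium is \emph{hyperbolic} for $\eps$ small. The eigenvalues of $\mathcal{B}$ are the square roots $\pm\sqrt{\mu}$ of the eigenvalues $\mu$ of $M_\infty$, and a direct computation gives $\mu_\pm = (\zeta - 2|u_\infty|^2) \pm \sqrt{|u_\infty|^4 - \eps^2}$. Using the expansion \eqref{eq:expansion_background}, i.e.\ $u_\infty(\eps) = \mathcal{O}(\eps)$ and hence $|u_\infty|^4 = \mathcal{O}(\eps^4)$, I obtain $\mu_\pm = \zeta + \mathcal{O}(\eps^2) \pm \iu\eps\big(1 + \mathcal{O}(\eps^2)\big)$, so that for $\eps$ sufficiently small both eigenvalues lie in the open right half-plane, at distance $\approx \zeta$ from the negative real axis. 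Consequently $\mu_\pm \notin (-\infty,0]$, the four eigenvalues $\pm\sqrt{\mu_\pm}$ of $\mathcal{B}$ avoid the imaginary axis, and $\mathcal{B}$ is hyperbolic with a two-dimensional stable and a two-dimensional unstable subspace, the stable spatial rates being $\approx -\sqrt{\zeta}$ (consistently with the essential-spectrum edge $\zeta_\eps = \zeta + \mathcal{O}(\eps^2)$).

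It then remains to conclude exponential decay from hyperbolicity. Since the trajectory $Y(x)$ associated with $w$ satisfies $Y(x) \to 0$ as $x \to +\infty$, it lies on the stable manifold of the hyperbolic equilibrium $0$ of the autonomous system $Y' = \mathcal{F}(Y)$; by the standard theory of the asymptotic behaviour of solutions near hyperbolic equilibria (see, e.g., \cite{KapProm}), convergence along the stable manifold is exponential, giving $|Y(x)| \le C \eu^{-\alpha x}$ for $x \ge x_0$ with some $\alpha > 0$ close to $\sqrt{\zeta}$. Applying the same argument to the time-reversed flow as $x \to -\infty$ yields the analogous bound there. Hence $w$, and therefore $u(\eps) - u_\infty(\eps)$, decays exponentially together with its derivatives. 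The main obstacle is precisely the hyperbolicity verification: one has to ensure that the forcing, which simultaneously shifts $\zeta$ to $\zeta - 2|u_\infty|^2$, introduces the off-diagonal coupling $u_\infty^2$, and adds the $\pm\iu\eps$ terms, does not push any eigenvalue of $M_\infty$ onto the negative real axis; the computation above shows that this perturbation is purely imaginary to leading order and therefore harmless.
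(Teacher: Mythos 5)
Your proof is correct and takes essentially the same route as the paper: the paper likewise rewrites \eqref{LLE_perp_stationary} as a first-order autonomous ODE system (in the real coordinates $(\RT u,\IT u,\partial_x\RT u,\partial_x\IT u)$ rather than your complexified $(w,\bar w)$ variables, which amounts to the same linearization up to conjugation) and concludes exponential convergence from the orbit being homoclinic to a hyperbolic equilibrium at $U_\infty$. The only difference is one of detail: you explicitly verify the hyperbolicity via the eigenvalues $\mu_\pm=\zeta-2|u_\infty|^2\pm\sqrt{|u_\infty|^4-\eps^2}=\zeta+\mathcal{O}(\eps^2)\pm\iu\eps(1+\mathcal{O}(\eps^2))$ and the qualitative convergence $Y(x)\to 0$ via the embedding $H^2(\R)\hookrightarrow C^1(\R)$, both of which the paper's one-line proof asserts as ``easily seen''.
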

\begin{proof}
Re-writing \eqref{LLE_perp_stationary} in its dynamical system formulation
$$
	\partial_x U=
	\begin{pmatrix}
		U_3 \\ U_4 \\ \zeta U_1 - (U_1^2+U_2^2)U_1 + \eps U_2 \\
		\zeta U_2 - (U_1^2+U_2^2)U_2 + \eps (-U_1 + f)
	\end{pmatrix}, \quad
	U =
	\begin{pmatrix}
		u_1(\eps) \\ u_2(\eps) \\ \partial_x u_1(\eps) \\ \partial_x u_2(\eps)
	\end{pmatrix}		
$$
for $u_1(\eps) = \RT(u(\eps)), u_2(\eps) = \IT(u(\eps))$ we easily see that $U$ is homoclinic to a hyperbolic equilibrium and thus converges exponentially fast to its limit state 
$
	U_\infty = (\RT (u_{\infty}(\eps)), \allowbreak \IT (u_{\infty}(\eps)), 0, 0)^T.
$
\end{proof}

\section{Stability analysis}\label{sec:stability}

In this section, we prove the stability results of Theorem~\ref{thm:spec_stab}~and Theorem~\ref{thm:nonlin_stab}. From now on $H^k(\R)=H^k(\R, \C)$, $k \in \N_0$ denotes the Sobolev spaces \emph{over the field} $\C$. In particular, the $L^2$-scalar product is now given by the \emph{$\C$-valued map}
$$
	\forall f,g \in L^2(\R,\C): \quad \langle f, g \rangle_{L^2} = \int_\R f \bar g dx.
$$
Let us start with the proof of the spectral stability result of Theorem~\ref{thm:spec_stab}.

\subsection{Proof of Theorem~\ref{thm:spec_stab}}\label{sec:spec_stability}
Suppose that $u = u(\eps) \in \C+ H^2(\R)$ is a solution of \eqref{LLE_perp_stationary} as in Theorem~\ref{thm:spec_stab} for $\eps\not =0$ sufficiently small. We determine the location of the spectrum of the operator $\mathcal{L}=JL$ defined in \eqref{def_lin_op}.

It is well-known \cite{KapProm} that we have a decomposition into essential and discrete spectrum
\begin{align}\label{eq:decomp_spec}
	\sigma(\mathcal{L}) = \sigma_\text{ess}(\mathcal{L}) \cup \sigma_d(\mathcal{L}).
\end{align}

\subsubsection*{\bf Essential spectrum of $\mathcal{L}$} The essential spectrum can be computed explicitly.

\begin{Lemma}\label{lem:ess_spec}
Let $\eps$ be sufficiently small. The essential spectrum is given by
$$
	\sigma_\text{ess}(\mathcal{L}) = \{\iu \omega \in \iu \R : \ |\omega| \in [\zeta_\eps, \infty) \},
$$
where $\zeta_\eps = \zeta + \mathcal{O}(\eps^2)$.
\end{Lemma}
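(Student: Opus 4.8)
The plan is to identify $\sigma_\text{ess}(\mathcal{L})$ with the spectrum of the asymptotic constant-coefficient operator obtained by freezing the coefficients at their limits as $|x|\to\infty$. By Corollary~\ref{cor:exp_conv} the solution $u=u(\eps)$ converges exponentially to its background $u_\infty=u_\infty(\eps)$, so the potential entries $|u|^2\to|u_\infty|^2$ and $u^2\to u_\infty^2$ exponentially fast. Hence the difference $\mathcal{L}-\mathcal{L}_\infty$, where $\mathcal{L}_\infty:=JL_\infty$ and
\[
	L_\infty=\begin{pmatrix} -\partial_x^2+\zeta-2|u_\infty|^2 & -u_\infty^2\\ -\bar{u}_\infty^2 & -\partial_x^2+\zeta-2|u_\infty|^2\end{pmatrix},
\]
is multiplication by an exponentially decaying matrix function, and therefore a relatively $\mathcal{L}_\infty$-compact perturbation. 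Weyl's theorem on the invariance of the essential spectrum (in the form used for such operators in \cite{KapProm}) then yields $\sigma_\text{ess}(\mathcal{L})=\sigma_\text{ess}(\mathcal{L}_\infty)=\sigma(\mathcal{L}_\infty)$, the last equality holding because a constant-coefficient operator on $\R$ has no discrete spectrum.

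Next I would compute $\sigma(\mathcal{L}_\infty)$ by Fourier transform. For each $k\in\R$ the Fourier symbol is the $2\times2$ matrix $\widehat{\mathcal{L}_\infty}(k)=J\,\widehat{L}_\infty(k)$ obtained by replacing $-\partial_x^2$ with $k^2$, and
\[
	\sigma(\mathcal{L}_\infty)=\bigcup_{k\in\R}\sigma\big(\widehat{\mathcal{L}_\infty}(k)\big).
\]
Writing $a_k:=k^2+\zeta-2|u_\infty|^2$, a direct evaluation of the characteristic polynomial of $\widehat{\mathcal{L}_\infty}(k)$ gives the dispersion relation $\lambda^2=|u_\infty|^4-a_k^2$.

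It then remains to analyze this relation. Since $u_\infty(\eps)=\mathcal{O}(\eps)$ by \eqref{eq:expansion_background}, we have $|u_\infty|^2=\mathcal{O}(\eps^2)$, so for $\eps$ small $a_k\ge\zeta-2|u_\infty|^2>0$ and $a_k^2>|u_\infty|^4$ for every $k$. Thus $\lambda^2<0$, all eigenvalues are purely imaginary, $\lambda=\pm\iu\sqrt{a_k^2-|u_\infty|^4}$, and $\sigma(\mathcal{L}_\infty)\subset\iu\R$. As $k\mapsto a_k$ is even with minimum $\zeta-2|u_\infty|^2$ at $k=0$ and range $[\zeta-2|u_\infty|^2,\infty)$, the quantity $|\omega|=\sqrt{a_k^2-|u_\infty|^4}$ sweeps out exactly $[\zeta_\eps,\infty)$ with $\zeta_\eps:=\sqrt{(\zeta-2|u_\infty|^2)^2-|u_\infty|^4}$. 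Expanding in $|u_\infty|^2=\mathcal{O}(\eps^2)$ gives $\zeta_\eps=\zeta+\mathcal{O}(\eps^2)$, which is the claimed formula.

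The main obstacle is the first step: because $\mathcal{L}=JL$ is not self-adjoint (and the associated semigroup is non-sectorial), one must justify carefully that the exponentially decaying coefficient difference indeed yields a relatively $\mathcal{L}_\infty$-compact perturbation and that the Weyl invariance of the essential spectrum applies in this setting. I would invoke the general framework for Hamiltonian/NLS-type operators in \cite{KapProm} rather than re-derive the relative-compactness estimate here; once this reduction to $\mathcal{L}_\infty$ is in place, the Fourier computation of the symbol spectrum and the expansion of $\zeta_\eps$ are routine.
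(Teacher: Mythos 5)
Your proposal is correct and follows essentially the same route as the paper: exponential convergence of $u$ to $u_\infty$ (Corollary~\ref{cor:exp_conv}) plus Weyl's theorem to reduce to the asymptotic constant-coefficient operator, and then the dispersion relation $\lambda^2 = |u_\infty|^4 - a_k^2$, which is exactly the paper's polynomial $p(k,\lambda)=\det(A(\lambda)-\iu k)$ in its first-order reformulation. Your Fourier-symbol computation and the paper's hyperbolicity criterion for $A(\lambda)$ are the same determinant in different clothing, and your explicit formula $\zeta_\eps=\sqrt{(\zeta-2|u_\infty|^2)^2-|u_\infty|^4}=\zeta+\mathcal{O}(\eps^2)$ agrees with the paper's expansion.
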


\begin{proof}
Since $u(x) \to u_\infty$ as $|x| \to \infty$ exponentially fast, cf.~Corollary~\ref{cor:exp_conv}, we can use Weyl's Theorem~\cite{KapProm} to find
$$
	\sigma_\text{ess}(\mathcal{L}) = \sigma_\text{ess}(\mathcal{L}^\infty)
$$
where the asymptotic constant coefficient operator is given by
$$
	\mathcal{L}^\infty = 
	\begin{pmatrix}
		- \iu & 0 \\
		0 & \iu
	\end{pmatrix}
	\begin{pmatrix}
		- \partial_x^2 + \zeta - 2 |u_\infty|^2 & - u_\infty^2 \\
		-\bar{u}_\infty^2 & - \partial_x^2 + \zeta - 2 |u_\infty|^2
	\end{pmatrix}.
$$
We calculate the essential spectrum of $\mathcal{L}^\infty $. Therefore, we write the spectral problem for $\mathcal{L}^\infty$ in its first order reformulation $\partial_x V = A(\lambda)V$ with
$$
	A(\lambda) =
	\begin{pmatrix}
		0 & 0 & 1 & 0 \\
		0 & 0 & 0 & 1 \\
		\zeta - 2|u_\infty|^2-\iu \lambda & u_\infty^2 & 0 & 0 \\
		\bar{u}_\infty^2 & \zeta - 2|u_\infty|^2+\iu \lambda & 0 & 0
	\end{pmatrix}, \quad
	V = 
	\begin{pmatrix}
		v_1 \\ v_2 \\ \partial_x v_1 \\ \partial_x v_2
	\end{pmatrix}.
$$
Then, the essential spectrum is characterized as follows (see for instance~\cite{KapProm}):
$$
	\lambda \in \sigma_\text{ess}(\mathcal{L}^\infty) \quad \Longleftrightarrow \quad
	\sigma(A(\lambda))\cap \iu \R \not = \emptyset.
$$
Computing
$$
	p(k,\lambda):=\det(A(\lambda)- \iu k) = k^4 + 2( \zeta-2|u_\infty|^2 ) k^2 + (\zeta-2|u_\infty|^2)^2-|u_\infty|^4   +\lambda^2 
$$
we find that $A(\lambda)$ is nonhyperbolic if and only if
$$
	\exists k \in \R: \quad p(k,\lambda)=0
$$
which is equivalent to
$$
	\lambda \in\{ \iu \omega \in \iu\R : \ |\omega| \in  [\zeta_\eps, \infty) \},
$$
with $\zeta_\eps = \zeta + \mathcal{O}(\eps^2)$ since $u_\infty = \mathcal{O}(\eps)$ and thus the claim follows.
\end{proof}

An immediate consequence of Lemma~\ref{lem:ess_spec} is the spectral instability of the wave $u$ if $\eps <0$. However, if $\eps>0$, the essential spectrum is stable and spectral stability is solely determined by the location of the discrete spectrum. From now on we focus on the case $\eps>0$.

\subsubsection*{\bf Discrete spectrum of $\mathcal{L}$}  Recall, that the discrete spectrum can be determined from the eigenvalue problem \eqref{eq:spec_stab_prob} which can be written as
\begin{align*}
	\left\{
	\begin{array}{rl}
		\iu \lambda v_1 &= - v_{1xx} + \zeta v_1 - 2 |u|^2 v_1 - u^2 v_2 - \iu \eps v_1, \\
		-\iu \lambda v_2 &= - v_{2xx} + \zeta v_2 - 2 |u|^2 v_2 - \bar u^2 v_1 + \iu \eps v_2.
	\end{array}\right.
\end{align*}
For $\eps=0$, we recover the spectral stability problem for the rotated soliton $\phi_{\theta_0}$ of NLS and $\lambda=0$ is an isolated eigenvalue of geometric multiplicity two and algebraic multiplicity four. More precisely, we find two Jordan chains of length two and by Lemma~\ref{lem:ker(L)} we have that the corresponding eigenspace is spanned by the vectors $(\iu \phi_{\theta_0},-\iu \bar{\phi}_{\theta_0})$ and $(\phi_{\theta_0}',\bar{\phi}_{\theta_0}')$. Consequently, it follows from standard perturbation theory~\cite{Kato}, that for small values of $\eps$ the total multiplicity of all eigenvalues in a small neighborhood of zero is also four. We now focus on the bifurcations of these eigenvalues which in the end will determine the spectral stability.

\subsubsection*{Eigenvalues close to the origin}
We compute expansions in $\eps$ of all eigenvalues of $\mathcal{L}=JL$ close to zero.
Observe that we always find $0 \in \sigma(\mathcal{L}-\eps)$ due to the translational invariance of \eqref{LLE_perp_stationary}. This yields $\eps \in \sigma(JL)$ and the corresponding eigenfunction is given by $(\partial_x u, \partial_x \bar{u})$. Since the spectrum of $JL$ is symmetric w.r.t.~the imaginary axis we also find $-\eps \in \sigma(JL)$. Note that the symmetry is an immediate consequence of the structure of $JL$, a composition of a skew-adjoint and self-adjoint operator. Thus, in the neighborhood of $\lambda=0$ only two unknown eigenvalues remain and they correspond to the broken gauge-symmetry in the LLE.

To compute the expansions of the perturbed rotational eigenvalues, we restrict to spaces of even functions. Following \cite{Kato}, we expand both remaining eigenvalues along with their corresponding eigenfunctions in a Puiseux series:
$$
	\lambda = \sqrt{\eps} \lambda_1+ \eps \lambda_2 + \mathcal{O}(\eps^{3/2}), \quad 
	V =V_0 + \sqrt{\eps} V_1 + \eps V_2 + \mathcal{O}(\eps^{3/2}),\quad V_0 = \begin{pmatrix} \iu \phi_{\theta_0} \\ - \iu \bar\phi_{\theta_0} \end{pmatrix},
$$
and the expansions are in powers of $\eps^{1/2}$ since we consider the splitting of a Jordan chain of length two. Moreover, we expand the operator $L $ in powers of $\eps$ which relies on expansions of the solution $u$:
\begin{align*}
	L &= L_0 + \eps L_1 + \mathcal{O}(\eps^2) \\
	L_0&=
	\begin{pmatrix}
		- \partial_x^2 + \zeta - 2 |u_0|^2 & - u_0^2 \\
		-\bar{u}_0^2 & - \partial_x^2 + \zeta - 2 |u_0|^2
	\end{pmatrix}, \
	L_1 =
	\begin{pmatrix}
	-2(u_0 \bar{u}_1 + \bar{u}_0 u_1) & -2 u_0 u_1 \\
	 -2 \bar{u}_0 \bar{u}_1 & -2 (u_0 \bar{u}_1 + \bar{u}_0 u_1)
	\end{pmatrix}, \\
	u &= u_0 + \eps u_1 + \mathcal{O}(\eps^2) , \qquad u_0 = \phi_{\theta_0}, \qquad u_1 = \iu \partial_\eps\theta(0) \phi_{\theta_0} - \iu \frac{f}{\zeta} + \varphi_1,
\end{align*}
and the equation for $\varphi_1$ is found from differentiating \eqref{eq:correction} w.r.t.~$\eps$ at $\eps=0$ (see also part two of the proof of Lemma~\ref{lem:nonsing}),
\begin{align*}
	L_{\theta_0} \varphi_1 = 2 |\phi_{\theta_0}|^2 \partial_\eps u_\infty(0) + \phi_{\theta_0}^2 \partial_\eps\bar{u}_\infty(0) + \iu \phi_{\theta_0}, \qquad \varphi_1 \perp \iu \phi_{\theta_0}.
\end{align*}
Substituting the expansions into the spectral problem $JL V = \lambda V$ yields equations at order $\eps^{1/2}$ and $\eps$,
\begin{align*}
	L_0 V_1 = \lambda_1 J^{-1} V_0, \quad
	L_0 V_2 + L_1 V_0
	= \lambda_1 J^{-1}V_1 +  \lambda_2 J^{-1} V_0.
\end{align*}
Since $J^{-1} V_0 \perp \Ker(L_0)$, we find $V_1 = \lambda_1 L_0^{-1} J^{-1} V_0 + \alpha V_0$, $\alpha \in \C$. Inserting this into the second equation amounts to
\begin{equation}\label{eq:cond_ev}
	L_0 V_2 = - L_1 V_0 +  \lambda_1^2 J^{-1} L_0^{-1} J^{-1} V_0+ \alpha\lambda_1 J^{-1} V_0 + \lambda_2 J^{-1} V_0
\end{equation}
and by the Fredholm alternative, \eqref{eq:cond_ev} is solvable if and only if
$$
	- L_1 V_0 +  \lambda_1^2 J^{-1} L_0^{-1} J^{-1} V_0+ \alpha\lambda_1 J^{-1} V_0 + \lambda_2 J^{-1} V_0 \perp \Ker(L_0).
$$
Since $ J^{-1} V_0 \perp \Ker(L_0)$, this yields
\begin{equation}\label{eq:formula_ev}
	\lambda_1^2 \langle J^{-1} L_0^{-1} J^{-1} V_0 ,V_0\rangle_{L^2} = \langle L_1 V_0, V_0\rangle_{L^2}.
\end{equation}

\begin{Lemma}\label{lem:formula_ev}
Let $\eps,\delta>0$ be sufficiently small and $B_\delta(0) \subset \C$ be the ball of radius $\delta$ centered at $\lambda =0$. Then $B_\delta(0) \cap \sigma_d(\mathcal{L})$
consists of four distinct simple eigenvalues, given by
$$
	\pm \eps , \pm \sqrt{\eps} \sqrt{- \pi f \sqrt{2\zeta}\sin\theta_0 } + \mathcal{O}(\eps) \in \sigma_d(\mathcal{L}).
$$
In particular, if $\sin\theta_0>0$, we have one unstable eigenvalue $\lambda = \eps$ of $\mathcal{L}$, which corresponds to a simple zero eigenvalue of $\mathcal{L} - \eps$, and two purely imaginary eigenvalues of $\mathcal{L}$:
$$
	\lambda_\pm =\pm \iu \left( \sqrt{\eps} \sqrt{\pi f \sqrt{2\zeta}\sin\theta_0} + \mathcal{O}(\eps)\right).
$$
If $\sin\theta_0< 0$, we have two real unstable eigenvalues of $\mathcal{L}$:
$$
	\lambda=\eps\quad\text{and}\quad\lambda_+=\sqrt{\eps} \sqrt{\pi f \sqrt{2\zeta}|\sin\theta_0|} + \mathcal{O}(\eps),
$$
where $\lambda_+$ is also an unstable eigenvalue of the operator $\mathcal{L} - \eps$ since it is of order $\mathcal{O}(\eps^{1/2})$.
\end{Lemma}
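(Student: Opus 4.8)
\emph{Strategy.} The four eigenvalues fall into two groups. Two of them are pinned exactly at $\pm\eps$: the translational invariance of \eqref{LLE_perp_stationary} forces $0\in\sigma(\mathcal L-\eps)$ with eigenfunction $(u',\bar u')$, hence $\eps\in\sigma(\mathcal L)$, and the symmetry of $\sigma(JL)$ about the imaginary axis then produces $-\eps$. The remaining two originate from the broken gauge symmetry and are governed by the scalar quadratic \eqref{eq:formula_ev}. Writing $A:=\langle J^{-1}L_0^{-1}J^{-1}V_0,V_0\rangle_{L^2}$ and $B:=\langle L_1 V_0,V_0\rangle_{L^2}$, my plan is to evaluate $A$ and $B$ and read off $\lambda_1=\pm\sqrt{B/A}$, so that the perturbed rotational eigenvalues are $\lambda_\pm=\pm\sqrt\eps\,\lambda_1+\mathcal O(\eps)$.

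\emph{Evaluation of $A$.} Since $L_0$ preserves the subspace of vectors of the form $(\psi,\bar\psi)^T$ and $J^{-1}V_0=-(\phi_{\theta_0},\bar\phi_{\theta_0})^T$ has this form, solving $L_0 W=J^{-1}V_0$ reduces to the real-linear scalar equation $L_{\theta_0}w=-\phi_{\theta_0}$ of Section~\ref{sec:existence}. Rotating the phase, $w=\eu^{\iu\theta_0}g$, decouples this into $L_+\RT g=-\phi_0$ and $L_-\IT g=0$, with $L_+:=-\partial_x^2+\zeta-3\phi_0^2$ and $L_-:=-\partial_x^2+\zeta-\phi_0^2$. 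Differentiating the profile equation $-\phi_0''+\zeta\phi_0-\phi_0^3=0$ in $\zeta$ gives $L_+\partial_\zeta\phi_0=-\phi_0$, while $\Ker L_-=\Span\{\phi_0\}$; imposing $W\perp\Ker L_0$ and using parity selects $g=\partial_\zeta\phi_0$. A short computation then collapses the pairing to $A=\partial_\zeta\|\phi_0\|_{L^2}^2=2\zeta^{-1/2}>0$.

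\emph{Evaluation of $B$, the main difficulty.} Expanding $L_1V_0$ explicitly, the pairing $B$ splits into an elementary part equal to $\tfrac{4f}{\zeta}\sin\theta_0\int_\R\phi_0^3\,dx=4\sqrt2\,\pi f\sin\theta_0$ and a second part $-4\int_\R\phi_0^2\,\RT(\phi_{\theta_0}\bar\varphi_1)\,dx$ involving the first-order profile correction $\varphi_1$. This $\varphi_1$-term is the crux of the argument: it is not only nonzero, it reverses the sign of the coefficient, so that dropping it would wrongly predict instability for $\sin\theta_0>0$. The key is to avoid computing $\varphi_1$ explicitly. Using the self-adjointness of $L_{\theta_0}$ together with the identity $L_+\phi_0=-2\phi_0^3$, equivalently $L_{\theta_0}(-\tfrac12\phi_{\theta_0})=\phi_0^2\phi_{\theta_0}$, I would trade the pairing against $\varphi_1$ for a pairing of $-\tfrac12\phi_{\theta_0}$ against the known right-hand side of the defining equation for $\varphi_1$. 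This yields $-4\int_\R\phi_0^2\,\RT(\phi_{\theta_0}\bar\varphi_1)\,dx=-6\sqrt2\,\pi f\sin\theta_0$, whence $B=-2\sqrt2\,\pi f\sin\theta_0$ and therefore $\lambda_1^2=B/A=-\pi f\sqrt{2\zeta}\sin\theta_0$.

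\emph{Distinctness and simplicity.} For $\eps\neq0$ small and $\sin\theta_0\neq0$ one has $\lambda_1\neq0$, so $\lambda_\pm=\pm\sqrt\eps\,\lambda_1+\mathcal O(\eps)$ are nonzero, mutually distinct, and of order $\eps^{1/2}$; in particular they differ from $\pm\eps=\mathcal O(\eps)$. Hence $B_\delta(0)\cap\sigma_d(\mathcal L)$ consists of the four distinct points $\pm\eps,\lambda_\pm$. Since standard perturbation theory fixes the total algebraic multiplicity inside $B_\delta(0)$ at four, exhibiting four distinct eigenvalues forces each to be algebraically simple. The stated dichotomy is then immediate from the sign of $\lambda_1^2$: for $\sin\theta_0>0$ the pair $\lambda_\pm$ is purely imaginary to leading order, while for $\sin\theta_0<0$ it is real with one unstable member $\lambda_+$ of order $\eps^{1/2}$.
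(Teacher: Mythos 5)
Your proposal is correct and takes essentially the same route as the paper's proof: it rests on the solvability condition \eqref{eq:formula_ev}, evaluates $A=\langle J^{-1}L_0^{-1}J^{-1}V_0,V_0\rangle_{L^2}=2\zeta^{-1/2}$ via the $\partial_\zeta$-derivative of the soliton profile (your scalar reduction to $L_\pm$ is just the componentwise form of the paper's vector identity), and handles the crucial $\varphi_1$-term exactly as the paper does, by self-adjointness of $L_{\theta_0}$ and the identity $L_{\theta_0}\phi_{\theta_0}=-2|\phi_{\theta_0}|^2\phi_{\theta_0}$, arriving at the same values $B=-2\sqrt{2}\pi f\sin\theta_0$ and $\lambda_1^2=-\pi f\sqrt{2\zeta}\sin\theta_0$. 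The one point where you stop slightly short of the full statement is that for $\sin\theta_0>0$ the pair $\lambda_\pm$ is \emph{exactly} purely imaginary rather than only to leading order; as the paper notes, this follows from the symmetry $\lambda\mapsto-\bar{\lambda}$ of $\sigma(JL)$ together with your own count of four distinct simple eigenvalues in $B_\delta(0)$, since otherwise $-\bar{\lambda}_+$ would be a fifth eigenvalue near the origin.
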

\begin{proof}
From the preceding discussion it remains to calculate the scalar products in \eqref{eq:formula_ev}. Let us start with $\langle J^{-1} L_0^{-1} J^{-1} V_0 ,V_0\rangle_{L^2}$. Consider the NLS
$$
	- \phi_{\theta_0}'' + \zeta \phi_{\theta_0} - |\phi_{\theta_0}|^2\phi_{\theta_0} = 0.
$$
Taking derivative w.r.t.~$\zeta$ yields
$$
	L_0 
	\begin{pmatrix}
		\partial_\zeta \phi_{\theta_0} \\ \partial_\zeta \bar\phi_{\theta_0}
	\end{pmatrix} =
	- 
	\begin{pmatrix}
		\phi_{\theta_0}	\\ \bar\phi_{\theta_0}
	\end{pmatrix}
	= J^{-1} V_0.
$$
The function $\partial_\zeta \phi_{\theta_0}$ is found from differentiating the formula of the NLS soliton and a straight forward calculation gives
\begin{align*}
	\langle J^{-1} L_0^{-1} J^{-1} V_0 ,V_0\rangle_{L^2} &= \int_\R \partial_\zeta \phi_{\theta_0} \bar\phi_{\theta_0} + \partial_\zeta \bar\phi_{\theta_0} \phi_{\theta_0} dx = 2 \zeta^{-1/2}.
\end{align*}
Next, we calculate the scalar product $\langle L_1 V_0, V_0\rangle_{L^2}$. We have
\begin{align*}
	\langle L_1 V_0, V_0\rangle_{L^2} = 2
	\int_\R 
	\begin{pmatrix}
		-\iu u_0^2 \bar u_1  \\ \iu \bar u_0^2 u_1
	\end{pmatrix} \cdot
	\overline{
	\begin{pmatrix}
		\iu u_0 \\ -\iu \bar u_0
	\end{pmatrix}} dx 
	= - 4 \RT \int_\R |u_0|^2 u_0 \bar u_1 dx. 
\end{align*}
Inserting the formula for $u_1$ into the integral yields
\begin{align*}
	&\RT \int_\R |u_0|^2 u_0 \bar u_1 dx  = 
	- \frac{f}{\zeta}  \|\phi_0\|_{L^3}^3 \sin\theta_0 + \RT \int_\R |\phi_{\theta_0}|^2 \phi_{\theta_0} \bar\varphi_1 dx \\
	&=  -\frac{f}{\zeta}  \|\phi_0\|_{L^3}^3 \sin\theta_0 
	+ \RT \int_\R |\phi_{\theta_0}|^2 \phi_{\theta_0} \overline{L_{\theta_0}^{-1} ( 2 |\phi_{\theta_0}|^2 \partial_\eps u_\infty(0) + \phi_{\theta_0}^2  \partial_\eps\bar{u}_\infty(0) + \iu \phi_{\theta_0})} dx \\
	& = - \frac{f}{\zeta}  \|\phi_0\|_{L^3}^3 \sin\theta_0 
	+\RT \int_\R L_{\theta_0}^{-1}(|\phi_{\theta_0}|^2 \phi_{\theta_0}) (\overline{ 2 |\phi_{\theta_0}|^2 \partial_\eps u_\infty(0) + \phi_{\theta_0}^2 \partial_\eps\bar{u}_\infty(0) + \iu \phi_{\theta_0}}) dx.
\end{align*}
Using that $L_{\theta_0} \phi_{\theta_0} = - 2|\phi_{\theta_0}|^2 \phi_{\theta_0}$ gives
\begin{align*}
	&\RT \int_\R L_{\theta_0}^{-1}(|\phi_{\theta_0}|^2 \phi_{\theta_0}) (\overline{ 2 |\phi_{\theta_0}|^2 \partial_\eps u_\infty(0) + \phi_{\theta_0}^2 \partial_\eps\bar{u}_\infty(0) + \iu \phi_{\theta_0}}) dx \\
	&= -\frac{1}{2} \RT \int_\R \phi_{\theta_0}( \overline{ 2 |\phi_{\theta_0}|^2 \partial_\eps u_\infty(0) + \phi_{\theta_0}^2 \partial_\eps\bar{u}_\infty(0) + \iu \phi_{\theta_0}}) dx \\
	&= \frac{3f}{2 \zeta} \|\phi_0\|_{L^3}^3\sin\theta_0.
\end{align*}
Finally, since $\|\phi_0\|_{L^3}^3 = \sqrt{2} \zeta \pi$, we have
$$
	\langle L_1 V_0, V_0\rangle_{L^2} = - 2 \frac{f}{\zeta} \|\phi_0\|_{L^3}^3 \sin\theta_0 
	= - 2\sqrt{2} f \pi\sin\theta_0  
$$
and thus from \eqref{eq:formula_ev} we obtain the desired formula for the simple eigenvalues
$$
	\lambda_\pm = \pm \sqrt{\eps} \sqrt{-\pi f \sqrt{2\zeta}\sin\theta_0} + \mathcal{O}(\eps),
$$
where in the case $\sin\theta_0>0$ the $\mathcal{O}(\eps)$ remainder is purely imaginary because of the Hamiltonian symmetry of the spectrum.
\end{proof}

Lemma~\ref{lem:formula_ev} proves the spectral instability of the wave $u$ if $\sin\theta_0<0$. However, if $\sin\theta_0>0$, no unstable eigenvalues of $\mathcal{L}-\eps$ occur from the splitting of the zero eigenvalue. Instead, we find a pair of purely imaginary eigenvalues $\lambda_\pm \in \iu\R$ of $\mathcal{L}$. Hence, we now focus on the case $\sin\theta_0>0$ and show that the only unstable eigenvalue of $\mathcal{L}$ is given by $\lambda = \eps$, which  then proves the spectral stability of the wave $u$. For this purpose, we employ the instablity index count developed in \cite{IndexCount1,IndexCount2} and also in \cite{ChugunovaPelinovsky}. To apply the instability count, we need to transform the spectral stability problem \eqref{eq:spec_stab_prob} into a problem with real-valued coefficients. Using similarity transformations with the matrizes
$$
	T_1 = \frac{1}{2}
	\begin{pmatrix}
		1 & 1 \\
		-\iu & \iu
	\end{pmatrix},\quad
	T_2 =
	\begin{pmatrix}
		\cos\theta_0 & \sin\theta_0 \\
		-\sin\theta_0 & \cos\theta_0 
	\end{pmatrix}
$$
such that $\tilde J = T_2T_1 J T_1^{-1}T_2^{-1}$, $\tilde L = T_2T_1 L T_1^{-1}T_2^{-1}$ we obtain the equivalent problem
$$
	\lambda \tilde V = (\tilde{J}\tilde{L} -\eps) \tilde V,
$$
and the eigenfunctions are related by $\tilde V = T_2T_1V$.
The real-valued operators $\tilde J,\tilde L$ are then of the form 
$$
	\tilde{J} = 
	\begin{pmatrix}
		0 & 1 \\ -1 & 0
	\end{pmatrix}, \qquad 
	\tilde{L} = 
	\begin{pmatrix}
		- \partial_x^2 + \zeta - 3 \phi_0^2  & 0\\
		0 & - \partial_x^2 + \zeta - \phi_0^2
	\end{pmatrix}
	+ \mathcal{O}(\eps).
$$
We recall necessary notation from \cite{KapProm}:
\begin{itemize}
	\item $n(A)$ denotes the \emph{number of negative eigenvalues} (counting multiplicities) of a linear operator $A$.
	\item Let $\lambda \in \sigma_d(\tilde{J}\tilde{L}) \cap (\iu \R \setminus\{0\})$ be an eigenvalue with algebraic multiplicity $m_a(\lambda)$ and $\Ker(\bigcup_{n \in \N}(\tilde{J}\tilde{L} - \lambda )^{n}) =\Span \{v_1,\dots,v_{m_a(\lambda)}\}$ be the generalized eigenspace. Then the \emph{negative Krein index} of $\lambda$ is defined by
	$$
		k_i^-(\lambda) := n (H), \quad\text{with}\quad H_{ij} := \langle \tilde{L} v_i ,v_j \rangle,
	$$
	and the total negative index is defined by
	$
		k_i^- := \sum_{\lambda \in  \sigma_d(\tilde{J}\tilde{L}) \cap \iu \R \setminus\{0\}} k_i^-(\lambda).
	$
	\item The \emph{real Krein index} is defined by
	$
		k_r := \sum_{\lambda \in \sigma_d(\tilde{J}\tilde{L}) \cap (0,\infty)} m_a(\lambda).
	$
	\item The \emph{complex Krein index} is defined by
	$
		k_c := \sum_{\lambda \in  \sigma_d(\tilde{J}\tilde{L}), \RT(\lambda)>0, \IT(\lambda)\not = 0} m_a(\lambda).
	$
\end{itemize}

Applying the instability index counting theory from \cite{ChugunovaPelinovsky,IndexCount1,IndexCount2} yields for $\eps>0$ sufficiently small the formula
\begin{align}\label{eq:counting_formula}
	k_r+k_i^- +k_c = n(\tilde{L}).
\end{align}

\begin{Remark}
In \cite{IndexCount1,IndexCount2} there appears an additional number $n(D)$ in the formula \eqref{eq:counting_formula}, which accounts for a nontrivial kernel generated by various symmetries of the problem. However, in Lemma~\ref{lem:formula_ev} we proved that $\Ker(\tilde{L}) = \{0\}$ provided $\eps>0$ is sufficiently small explaining the absence of this number in our situation (see also \cite{ChugunovaPelinovsky}).
\end{Remark}

In the next lemma, we use \eqref{eq:counting_formula} to show that $\eps \in \sigma(\mathcal{L})$ is the only unstable eigenvalue of $\mathcal{L}$ proving that $\sigma_d(\mathcal{L}-\eps) \subset \{-2\eps\} \cup \{\RT = - \eps\} \cup \{0\}$ with simple eigenvalues $\lambda=0,-2\eps$.

\begin{Lemma}\label{lem:index_calc}
Let $\eps>0$ be sufficiently small and $\sin\theta_0>0$. Then, $n(\tilde{L})= 3$, $k_r = 1$, and $k_i^- = 2$.
\end{Lemma}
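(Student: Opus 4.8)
The plan is to establish the three numbers by combining a direct perturbation analysis of the self-adjoint operator $\tilde{L}$ with the list of small eigenvalues already extracted in Lemma~\ref{lem:formula_ev}, and then to read off the remaining index from the counting formula \eqref{eq:counting_formula}. The main work is the computation of $n(\tilde{L})$; the values of $k_r$ and $k_c$ follow from the explicit description of the discrete spectrum, and $k_i^-$ is then forced by \eqref{eq:counting_formula}.

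First I would compute $n(\tilde{L})$ by perturbing from $\eps = 0$. At $\eps = 0$ the operator is block-diagonal, $\tilde{L}_0 = \mathrm{diag}(L_+, L_-)$ with $L_+ = -\partial_x^2 + \zeta - 3\phi_0^2$ and $L_- = -\partial_x^2 + \zeta - \phi_0^2$. Since $\phi_0$ is the positive NLS ground state, standard Sturm--Liouville theory gives $L_-\phi_0 = 0$ with $n(L_-) = 0$, while $L_+\phi_0' = 0$ with $\phi_0'$ the first excited state, so $n(L_+) = 1$. Hence $n(\tilde{L}_0) = 1$ and $\Ker(\tilde{L}_0) = \Span\{(\phi_0',0)^T,(0,\phi_0)^T\}$ is two-dimensional. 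Since $\Ker(\tilde{L}) = \{0\}$ for small $\eps > 0$ (as noted above), both zero eigenvalues leave the origin, and their direction of motion is governed by the $2\times 2$ reduced matrix $M_{jk} = \langle \tilde{L}_1 \hat e_k, \hat e_j\rangle$ on an orthonormal basis $\hat e_1,\hat e_2$ of $\Ker(\tilde{L}_0)$, where $\tilde{L} = \tilde{L}_0 + \eps\tilde{L}_1 + \mathcal{O}(\eps^2)$. Because $u_0 = \phi_{\theta_0}$ and the first-order correction $u_1$ are both even, the coefficients of $\tilde{L}_1$ are even and $\tilde{L}_1$ preserves parity; pairing the even mode $(0,\phi_0)$ against the odd mode $(\phi_0',0)$ then makes $M$ diagonal. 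It therefore suffices to determine the signs of the two diagonal entries: if both are negative, the two eigenvalues become negative and $n(\tilde{L}) = 1 + 2 = 3$.

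Next I would pin down $k_r$ and $k_c$. By the perturbation argument around $\eps = 0$ (where $\lambda = 0$ is the only discrete eigenvalue of the NLS linearization, with algebraic multiplicity four and isolated from $\sigma_\text{ess}$), for $\eps > 0$ small the entire discrete spectrum of $\mathcal{L}$ in the spectral gap consists precisely of the four eigenvalues produced in Lemma~\ref{lem:formula_ev}, namely $\pm\eps$ together with $\lambda_\pm$. When $\sin\theta_0 > 0$ the pair $\lambda_\pm$ is purely imaginary, so $\lambda = \eps$ is the unique eigenvalue in $(0,\infty)$ and it is simple, giving $k_r = 1$; moreover no eigenvalue has simultaneously positive real part and nonzero imaginary part, so $k_c = 0$. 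Substituting $n(\tilde{L}) = 3$, $k_r = 1$, $k_c = 0$ into \eqref{eq:counting_formula} then yields $k_i^- = 2$, completing the proof.

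The decisive and most delicate step is the sign computation for the two diagonal entries of $M$. I expect the entry associated with the gauge mode $(0,\phi_0)$ to be controlled by the same integral $\langle L_1 V_0, V_0\rangle = -2\sqrt{2}\,f\pi\sin\theta_0$ that governed the splitting in Lemma~\ref{lem:formula_ev}, so that $\sin\theta_0 > 0$ renders it negative; the entry associated with the translational mode $(\phi_0',0)$ must be shown negative as well, which is where the persistence of the translation symmetry (the exact eigenvalue $\lambda = \eps$ with eigenfunction $\partial_x u$) enters. Verifying both signs requires carrying out the similarity transform on $L_1$ explicitly and evaluating the resulting $\sech$-integrals, and it is precisely here that the hypothesis $\sin\theta_0 > 0$ is essential; the consistency $1 + 2 + 0 = 3$ with \eqref{eq:counting_formula} provides a useful check that both entries indeed have the claimed sign.
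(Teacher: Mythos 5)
Your overall strategy inverts the paper's logic, and it runs into a genuine obstruction at exactly the step you yourself flag as decisive. You want $n(\tilde{L})=3$ from first-order perturbation theory on $\Ker(\tilde{L}_0)$, which requires both diagonal entries of your reduced matrix $M$ to be negative. For the gauge mode $(0,\phi_0)$ this works: that entry is proportional to $\langle L_1 V_0,V_0\rangle = -2\sqrt{2}\pi f\sin\theta_0<0$. But the translational entry is \emph{exactly zero}: differentiating the stationary equation \eqref{LLE_perp_stationary} in $x$ gives $LW=\eps J^{-1}W$ for $W=(\partial_x u,\partial_x\bar{u})^T$, and
$$
	\langle J^{-1}W,W\rangle_{L^2} = \iu\|\partial_x u\|_{L^2}^2 - \iu\|\partial_x u\|_{L^2}^2 = 0,
$$
hence $\langle LW,W\rangle_{L^2}=0$ for all $\eps$; expanding $W=W_0+\eps W_1+\mathcal{O}(\eps^2)$ and using $L_0W_0=0$ yields $\langle L_1W_0,W_0\rangle=0$. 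So the small eigenvalue of $\tilde{L}$ associated with the translational mode moves only at order $\eps^2$ or higher, and your first-order reduced matrix cannot decide its sign; you would need a second-order reduction, which you neither set up nor compute (your hedging --- ``I expect\dots'', ``must be shown negative'' --- sits precisely on this point, and the expected sign computation at first order would in fact return $0$). This degeneracy is not an accident: it is forced by the persistence of the translational symmetry, the very structure you hoped would supply the negative sign.

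There is a second gap: your values $k_r=1$ and $k_c=0$ rest on the assertion that the four small eigenvalues of Lemma~\ref{lem:formula_ev} exhaust $\sigma_d(\mathcal{L})$. The perturbation argument you invoke only controls the spectrum in a small neighborhood of the origin; it does not exclude eigenvalues bifurcating from the edges of the essential spectrum $\pm\iu[\zeta_\eps,\infty)$ (a standard phenomenon for perturbed NLS operators), nor eigenvalues at large $|\lambda|$, so completeness of your list needs a separate argument. The paper avoids both difficulties with a squeeze: it proves the \emph{lower} bounds $k_r\geq 1$ (from $\eps\in\sigma_d(\tilde{J}\tilde{L})$, translational symmetry) and $k_i^-\geq 2$ --- the latter via the explicit Krein-signature computation $\langle\tilde{L}\tilde{V},\tilde{V}\rangle_{L^2} = 2\eps|\lambda_1|^2\langle L_+^{-1}\phi_0,\phi_0\rangle_{L^2}+\mathcal{O}(\eps^{3/2})<0$ for the imaginary pair $\lambda_\pm$, an input entirely absent from your proposal --- together with the \emph{upper} bound $n(\tilde{L})\leq 3$ from perturbing $\mathrm{diag}(L_+,L_-)$. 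The counting formula \eqref{eq:counting_formula} then forces $n(\tilde{L})=3$, $k_r=1$, $k_i^-=2$, $k_c=0$ simultaneously, with no need to determine the sign of the translational eigenvalue of $\tilde{L}$ directly and no need to prove the eigenvalue list complete, since \eqref{eq:counting_formula} does the global bookkeeping. To salvage your route you would need precisely those two missing ingredients (a second-order computation for the translational mode and a global localization of $\sigma_d(\mathcal{L})$), both of which are substantially harder than the paper's index argument.
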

 
\begin{proof}
$k_r \geq 1$ is clear since $\eps \in \sigma_d(\tilde{J}\tilde{L})$ due to the translational symmetry. Now consider the  Sturm-Liouville operators on the line $\R$,
$$
	L_+ := -\partial_x^2 + \zeta - 3 \phi_0^3, \quad L_-:= -\partial_x^2 + \zeta - \phi_0^3.
$$
We have $\Ker(L_+) = \Span\{\phi_0'\}$, $\Ker(L_-) = \Span\{\phi_0\}$ and since $\phi_0'$ has one zero and $\phi_0>0$ on $\R$ we find $n(L_+) = 1$, $n(L_-) = 0$ by the standard theory for Sturm-Liouville operators. In particular, we obtain
$$
	n \Big( \begin{pmatrix}
		L_+ & 0 \\ 0 & L_-
	\end{pmatrix} \Big) = 1, 
	\quad 0 \in \sigma_d \Big(
	\begin{pmatrix}
		L_+ & 0 \\ 0 & L_-
	\end{pmatrix}\Big), \quad m_a(0) = 2,
$$ 
and by means of perturbation theory for eigenvalues we conclude that there are at most three negative eigenvalues of $\tilde{L}$ for $\eps$ sufficiently small, which proves $n(\tilde{L}) \leq 3$. Moreover, by Lemma~\ref{lem:formula_ev} we find purely imaginary simple eigenvalues $\lambda_\pm\in\iu \R$ with $\bar{\lambda}_+=\lambda_-$ by the symmetry of the spectrum and we show that $k_i^-(\lambda_\pm) = 1$. Indeed, from Lemma~\ref{lem:formula_ev} and the discussion before, we have Puiseux expansions for the eigenvalue and corresponding eigenfunction
$$
	\lambda_+ = \sqrt{\eps} \lambda_1 + \mathcal{O}(\eps), \quad 
	\tilde V = \begin{pmatrix} 0 \\ \phi_0 \end{pmatrix} - \sqrt{\eps} \left[ \lambda_1 \begin{pmatrix} L_+^{-1}\phi_0 \\ 0\end{pmatrix} + \alpha 
	\begin{pmatrix}
		0 \\ \phi_0
	\end{pmatrix} \right]
	+ \mathcal{O}(\eps)
$$
where the eigenfunction is found from the relation $\tilde{V} = T_2T_1 V$. Hence direct calculations yield
\begin{align*}
	\langle \tilde{L} \tilde V,\tilde V \rangle_{L^2} = - \lambda_1 \langle \tilde{J}\tilde V,\tilde V \rangle_{L^2}
	=\eps 2 |\lambda_1|^2 \langle  L_+^{-1} \phi_0 ,\phi_0 \rangle_{L^2} + \mathcal{O}(\eps^{3/2}).
\end{align*}
Since $\langle L_+^{-1}\phi_0 , \phi_0 \rangle_{L^2} =- \langle \partial_\zeta\phi_0 , \phi_0 \rangle_{L^2}<0$, cf.~the proof of Lemma~\ref{lem:formula_ev}, we find $k_i^-(\lambda_+) = 1$ and from $k_i^-(\lambda_+)=k_i^-(\bar{\lambda}_+)$ and $\bar{\lambda}_+=\lambda_-$ it follows that $k_i^- \geq 2$. Thus, using \eqref{eq:counting_formula} we infer $n(L)=3$, $k_r=1$, and $k_i^- = 2$, which finishes the proof.
\end{proof}

Combing our results on the discrete and essential spectrum we finally obtain
$$
	\sigma(\mathcal{L}-\eps)\subset \{-2\eps\} \cup \{z \in \C: \ \RT z = -\eps \} \cup \{0\}
$$
with simple eigenvalues $\lambda = -2 \eps, 0$. Thus, the wave is spectrally stable provided that $\eps,\sin\theta_0>0$ as claimed.

\subsection{Proof of Theorem~\ref{thm:nonlin_stab}}\label{sec:proof_nonlin_stab}
We prove the asymptotic orbital stability in $H^1$ of the spectrally stable solitary waves of Theorem~\ref{thm:spec_stab}. The strategy of the proof follows the work in \cite{StanislavovaStefanov2018} where asymptotic orbital stability is obtained for \emph{periodic} spectrally stable solutions of LLE. Our method deviates from \cite{StanislavovaStefanov2018} when establishing uniform resolvent bounds. Indeed, high frequency resolvent estimates in \cite{StanislavovaStefanov2018} are only proven for the linearization operator with periodic coefficients. Using an abstract functional analytic approach, we extend this result to the case of localized perturbations.

\subsubsection*{\bf Linearized stability}
Let $u \in \C + H^2(\R)$ be a spectrally stable solution of \eqref{LLE_perp_stationary} for $\eps>0$ sufficiently small and denote by $\mathcal{L}-\eps$ the linearization about $u$.
In a first step we prove linearized stability. Note that decay of the semigroup $\left(\eu^{(\mathcal{L}-\eps)t}\right)_{t \geq 0}$ cannot be concluded immediately from the spectral stability of $\mathcal{L}-\eps$, since the spectrum of the operator is not confined to a sector of $\C$ and therefore the Spectral Mapping Theorem is a-priori not available. However, we can use the following characterization of exponential stability of semigroups in Hilbert spaces called the Pr\"uss-Theorem, cf.~\cite{Pruess} Corollary~4.

\begin{Theorem*}[Pr\"uss, \cite{Pruess}, Corollary~4]
Let $A$ be the generator of a $C_0$-semigroup $(\eu^{At})_{t \geq 0}$ in a Hilbert space $H$. Then $(\eu^{At})_{t \geq 0}$ is exponentially stable if and only if
$$
	\{\lambda\in\C :\ \RT(\lambda) \geq 0\} \subset \rho(A) \quad\text{and}\quad \sup\left\{
	\|(A-\lambda)^{-1}\|_{H \to H}:\ \lambda \in \C, \RT(\lambda)\geq 0 \right\} < \infty.
$$
\end{Theorem*}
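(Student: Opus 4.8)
The plan is to establish this biconditional---the Gearhart--Pr\"uss theorem---by treating the two implications separately, writing $R(\lambda):=(\lambda-A)^{-1}$ so that $\|R(\lambda)\|=\|(A-\lambda)^{-1}\|$. The forward implication is elementary: if $\|\eu^{At}\|\le M\eu^{-\omega t}$ for some $M,\omega>0$, then for $\RT\lambda\ge0$ the Laplace representation $R(\lambda)=\int_0^\infty\eu^{-\lambda t}\eu^{At}\,dt$ converges, placing $\lambda\in\rho(A)$ and giving $\|R(\lambda)\|\le\int_0^\infty M\eu^{-\omega t}\,dt=M/\omega$ uniformly; both conditions follow at once.

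For the reverse implication I would assume $\{\RT\lambda\ge0\}\subset\rho(A)$ and $M:=\sup_{\RT\lambda\ge0}\|R(\lambda)\|<\infty$. First, enlarge the region of control: by a Neumann series each point within distance $1/M$ of the axis $\iu\R$ lies in $\rho(A)$ with $\|R(\lambda)\|\le2M$, so $R$ is bounded by $M':=2M$ on a strip $\{\RT\lambda\ge-\delta\}$ with $\delta:=1/(2M)$. The core of the argument is a Plancherel identity in the Hilbert space $H$: for $x\in H$ and $a$ above the (finite) growth bound $\omega_0$ of the semigroup, the map $\omega\mapsto R(a+\iu\omega)x$ is the Fourier transform of $t\mapsto\eu^{-at}\eu^{At}x$ (extended by $0$ for $t<0$), whence
\[
	2\pi\int_0^\infty \eu^{-2at}\,\|\eu^{At}x\|^2\,dt=\int_\R\|R(a+\iu\omega)x\|^2\,d\omega .
\]

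The decisive step is to bound the right-hand side uniformly as $a$ decreases toward $0$. Comparing the line $a+\iu\R$ with a fixed reference line $b+\iu\R$, $b>\omega_0$, via the resolvent identity yields the pointwise estimate $\|R(a+\iu\omega)x\|\le(1+(b-a)M')\|R(b+\iu\omega)x\|$, valid for all $a\ge-\delta$; squaring, integrating in $\omega$, and invoking the identity above on the line $b+\iu\R$ produces $\sup_{a>0}\int_\R\|R(a+\iu\omega)x\|^2\,d\omega\le C\|x\|^2$ with $C$ independent of $a$. Feeding this back into the Plancherel identity and letting $a\downarrow\max(0,\omega_0)$ (using monotone convergence) shows that $t\mapsto\eu^{-\omega_0 t}\eu^{At}x$ lies in $L^2([0,\infty),H)$ with norm controlled by $\|x\|$; equivalently, every orbit of the rescaled semigroup $(\eu^{(A-\omega_0)t})_{t\ge0}$ is square integrable. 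By Datko's theorem this forces the rescaled semigroup to have strictly negative growth bound, which is compatible with the definition of $\omega_0$ only if $\omega_0<0$, i.e.\ $(\eu^{At})_{t\ge0}$ is exponentially stable.

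I expect the reverse implication to be the main obstacle, and within it the passage from the merely pointwise bound $\|R(a+\iu\omega)x\|\le M\|x\|$---which is \emph{not} integrable in $\omega$---to a genuine $L^2$-in-$\omega$ estimate through comparison with a convergent reference line. This is precisely where the Hilbert space hypothesis is indispensable, since Plancherel's theorem for $H$-valued functions has no Banach space analogue; indeed, the characterization is known to fail on general Banach spaces, where the spectral bound can differ from the growth bound.
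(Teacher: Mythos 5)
The paper offers no proof of this statement to compare against: it is imported verbatim, with citation, as Corollary~4 of \cite{Pruess} and used as a black box in the proof of Lemma~\ref{lem:lin_exp_stability}, so the only meaningful check is against the literature. Judged on its own terms, your argument is the standard modern proof of the Gearhart--Pr\"uss theorem (cf.~\cite{Engel_Nagel}, Theorem~V.1.11) and is correct in structure: the Laplace-transform representation $R(\lambda)=\int_0^\infty \eu^{-\lambda t}\eu^{At}\,dt$ settles the forward direction, and the converse correctly combines the $H$-valued Plancherel identity on vertical lines, the resolvent-identity comparison with a fixed reference line $b>\omega_0$ to obtain $\sup_{a}\int_\R\|R(a+\iu\omega)x\|^2\,d\omega\lesssim\|x\|^2$, and Datko's theorem applied to the rescaled semigroup to rule out $\omega_0\ge 0$. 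Three details deserve tightening. First, the Neumann series gives $\|R(\lambda)\|\le 2M$ only within distance $1/(2M)$ of $\iu\R$, not $1/M$ (your subsequent choice $\delta=1/(2M)$ is the consistent one); in fact the entire strip extension is superfluous on your route, since the comparison estimate is only ever invoked for $a\in\big(\max(0,\omega_0),b\big]$, where the hypothesis already supplies the bound $M$ --- the strip matters only in proofs that conclude by contour shifting rather than by Datko. Second, split off the trivial case $\omega_0<0$ before letting $a\downarrow\max(0,\omega_0)$: monotone convergence yields $\eu^{-\omega_0 t}\eu^{At}x\in L^2([0,\infty),H)$ only when $\omega_0\ge 0$, which is exactly the case needed for the contradiction, so the conclusion stands but the phrasing conflates the two cases. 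Third, state explicitly that the Plancherel identity is justified only for $a>\omega_0$, where $t\mapsto\eu^{-at}\eu^{At}x$ lies in $L^1\cap L^2$ --- precisely the range in which you use it. Your closing remark is accurate and is the essential point: the vector-valued Plancherel theorem is the genuinely Hilbertian ingredient, and the characterization fails in general Banach spaces, where the spectral bound can be strictly smaller than the growth bound.
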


Recall, that by the presence of the translational symmetry, $0 \in \sigma(\mathcal{L} - \eps)$, which violates the spectral condition in Pr\"uss Theorem. To overcome this problem, we introduce the spectral projection $P_0$ onto $\Ker(\mathcal{L} - \eps)$ and show that the restricted operator $(\mathcal{L} - \eps)|_E$ satisfies the conditions in Pr\"uss Theorem, where $E:=\Ker(P_0)$. This then leads to decay of the semigroup restricted to the subspace $E$, which is enough to establish the orbital stability result (cf.~Theorem~4.3.5~in~\cite{KapProm}). 

Recall the basic properties of the spectral projection: $(\mathcal{L} - \eps)P_0 =P_0 (\mathcal{L} - \eps) = 0$ and
\begin{align}\label{eq:spec_stabelized}
	\sigma((\mathcal{L} - \eps)|_E) = \sigma(\mathcal{L} - \eps) \setminus\{0\} \subset \{\lambda \in \C:\ \RT(\lambda) \leq -\eps\}.
\end{align}

\begin{Lemma}\label{lem:lin_exp_stability} 
There exist constants $\eta>0,C\geq 1$ such that
$$
	\|\eu^{(\mathcal{L} - \eps)t}|_E\|_{H^1 \to H^1} \leq C \eu^{-\eta t} \quad \text{for } t \geq 0.
$$
\end{Lemma}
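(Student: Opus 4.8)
The plan is to apply the Prüss Theorem to the operator $(\mathcal{L}-\eps)|_E$ acting on the Hilbert space $E = \Ker(P_0) \subset (H^1(\R))^2$. Since we already know from \eqref{eq:spec_stabelized} that $\sigma((\mathcal{L}-\eps)|_E) \subset \{\RT \lambda \leq -\eps\}$, the spectral condition $\{\RT\lambda \geq 0\} \subset \rho((\mathcal{L}-\eps)|_E)$ is satisfied (indeed the whole right half-plane up to $\RT\lambda = -\eps$ lies in the resolvent set). Therefore the entire content of the proof is the \emph{uniform resolvent bound}
$$
	\sup\left\{ \|((\mathcal{L}-\eps)|_E - \lambda)^{-1}\|_{H^1 \to H^1} : \RT(\lambda) \geq 0 \right\} < \infty.
$$
First I would split the closed right half-plane into a compact piece and the high-frequency regime $|\lambda|$ large. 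On any \emph{bounded} region of $\{\RT\lambda \geq 0\}$, the resolvent $\lambda \mapsto ((\mathcal{L}-\eps)|_E - \lambda)^{-1}$ is a continuous $H^1 \to H^1$ operator-valued map on a set with positive distance to the spectrum, hence it is bounded there by compactness. The genuine difficulty is entirely the behavior as $|\lambda| \to \infty$.

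For the high-frequency part I would invoke the abstract resolvent estimate announced as Theorem~\ref{thm:ex_bdd_resolvent} in the introduction. The obstruction here is structural: the operator $\mathcal{L} = JL$ is \emph{not sectorial}, because its essential spectrum fills out the imaginary axis above height $\zeta_\eps$ (cf.~Lemma~\ref{lem:ess_spec}), so the resolvent does not decay like $|\lambda|^{-1}$ along vertical lines and no Spectral Mapping Theorem is available. One must instead show that $\|((\mathcal{L}-\eps)-\lambda)^{-1}\|_{H^1 \to H^1}$ stays \emph{bounded} (not decaying) as $\lambda \to \infty$ with $\RT\lambda \geq 0$. The mechanism is that for large $|\lambda|$ the principal part $-\partial_x^2$ dominates: writing $\mathcal{L}-\eps = \mathcal{L}^\infty - \eps + \mathcal{K}$ where $\mathcal{L}^\infty$ is the constant-coefficient asymptotic operator and $\mathcal{K}$ collects the exponentially localized (relatively compact) potential terms, I would first establish the uniform bound for the constant-coefficient operator $\mathcal{L}^\infty - \eps$ by explicit Fourier analysis, since on the Fourier side the symbol is an explicit $2\times 2$ matrix whose inverse can be estimated uniformly in $(k,\lambda)$ for $\RT\lambda \geq 0$, $|\lambda|$ large. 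The key quantitative input is that $\RT\lambda \geq 0$ keeps $\lambda$ away from the purely imaginary essential spectrum by the gap $\eps$ coming from the $-\eps$ shift.

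The remaining step is to upgrade from the asymptotic operator to the full operator. Here I would use a perturbation/second-resolvent argument: since $\mathcal{K}$ is a relatively $\mathcal{L}^\infty$-compact perturbation with exponentially decaying coefficients, the identity
$$
	((\mathcal{L}-\eps)-\lambda)^{-1} = ((\mathcal{L}^\infty - \eps)-\lambda)^{-1}\bigl(I - \mathcal{K}((\mathcal{L}-\eps)-\lambda)^{-1}\bigr)
$$
together with the fact that $\|\mathcal{K}((\mathcal{L}^\infty-\eps)-\lambda)^{-1}\|_{H^1\to H^1} \to 0$ as $|\lambda|\to\infty$ (the localized potential is relatively compact against the Laplacian-dominated resolvent) allows a Neumann-series inversion for $|\lambda|$ large, yielding the uniform high-frequency bound on the full resolvent. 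Combining the compact-region bound with the high-frequency bound gives the supremum condition, and after restricting to $E$ — where the zero eigenvalue has been removed so the whole right half-plane down to $\RT\lambda = -\eps$ is resolvent — Prüss's Theorem delivers constants $C \geq 1$, $\eta > 0$ with $\|\eu^{(\mathcal{L}-\eps)t}|_E\|_{H^1\to H^1} \leq C\eu^{-\eta t}$; one may take any $0 < \eta < \eps$. \textbf{The main obstacle} I expect is the non-sectoriality: the uniform-in-$\lambda$ control of the explicit matrix symbol of $(\mathcal{L}^\infty-\eps-\lambda)^{-1}$ along and near the imaginary axis, where the asymptotic essential spectrum lives, is what the abstract Theorem~\ref{thm:ex_bdd_resolvent} is designed to furnish, and verifying its hypotheses for the present operator is the crux of the argument.
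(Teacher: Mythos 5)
Your skeleton is the same as the paper's (Prüss's Theorem on $E = \Ker(P_0)$, reduction to a uniform resolvent bound on $\{\RT\lambda \geq 0\}$, the compact region handled by analyticity/continuity, the crux at high frequencies), but your high-frequency argument contains a genuine gap. The Neumann-series step needs $\|\mathcal{K}\,((\mathcal{L}^\infty-\eps)-\lambda)^{-1}\|\to 0$ as $|\lambda|\to\infty$ in the closed right half-plane, and your justification --- relative compactness of the localized potential --- does not deliver it. Relative compactness yields norm smallness of $\mathcal{K}R^\infty(\lambda)$ only where $\|R^\infty(\lambda)\|$ itself decays: writing $\mathcal{K}R^\infty(\lambda) = \bigl[\mathcal{K}R^\infty(\lambda_0)\bigr]\bigl[(\mathcal{L}^\infty-\eps-\lambda_0)R^\infty(\lambda)\bigr]$, the second factor has norm of order $|\lambda|\,\|R^\infty(\lambda)\|$, and along the imaginary directions you yourself observe that $\|R^\infty(\lambda)\|$ merely stays bounded (of order $1/\eps$, since the shifted essential spectrum sits on $\RT\lambda=-\eps$), so this factor grows like $|\IT\lambda|$. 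Nor does a soft compactness argument on the other side work: $R^\infty(\lambda)^*\to 0$ only strongly, and multiplication by a decaying potential is not compact on $L^2$. The claimed decay is in fact true, but only via a quantitative limiting-absorption/local-smoothing estimate: for $\RT\lambda\geq 0$ and $|\IT\lambda|$ large the resolvent concentrates on spatial frequencies $|k|\sim|\IT\lambda|^{1/2}$, the corresponding quasimodes are wave packets spread over length $\sim |\IT\lambda|^{1/2}/\eps$, and pairing an exponentially localized potential against them loses a factor $\sim\bigl(\eps\,|\IT\lambda|^{1/2}\bigr)^{-1/2}$. That estimate is nontrivial and is exactly what your sketch does not supply.

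The paper sidesteps this issue entirely, and you have in effect misread the role of Theorem~\ref{thm:ex_bdd_resolvent}: it is not proved via the constant-coefficient asymptotic operator plus a perturbation, and its hypotheses require no such decomposition. It applies \emph{directly} to the full variable-coefficient linearization with $A_\pm=-\partial_x^2+\zeta-2|u|^2$ (self-adjoint, semibounded, common domain $H^2$) and $B=-u^2$ bounded --- verifying the hypotheses is immediate, and the crux is the theorem's own proof, which solves the first equation of the $2\times2$ system for $\phi_1$, substitutes, and exploits the self-adjoint distance formula $\|(A-\lambda)^{-1}\|=1/\mathrm{dist}(\lambda,\sigma(A))$; applied with spectral parameter $\lambda+\eps$ (so $\lambda_r\geq\eps>0$ whenever $\RT\lambda\geq0$) it yields the uniform bound $\lesssim 1/\eps$ for $|\IT\lambda|$ large. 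The paper then covers $\RT\lambda\geq\gamma_1$ by Hille--Yosida and the remaining compact rectangle by analyticity. You also skip the function-space bookkeeping: the paper proves the uniform bound in $L^2$, upgrades it to $H^2$ by elliptic estimates together with commuting $(JL-\eps)$ through the resolvent and the projection, and interpolates to get $H^1$; your Fourier argument gives $H^1$ bounds only for the constant-coefficient piece, and the Neumann step would again require the missing quantitative estimate in $H^1$. To repair your proof, either establish the high-frequency smoothing bound for $\mathcal{K}R^\infty(\lambda)$ honestly, or apply Theorem~\ref{thm:ex_bdd_resolvent} to the full operator as the paper does.
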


\begin{proof}
It follows from the Lumer-Phillips Theorem (Theorem~II.3.15 in \cite{Engel_Nagel}) and the Bounded Perturbation Theorem (Theorem~III.1.3 in \cite{Engel_Nagel}) that $\mathcal{L} - \eps$ is the generator a $C_0$-semigroup on $(H^1(\R))^2$ and the same holds after restricting the operator to $E=\Ker(P_0)$. According to the Pr\"uss Theorem and \eqref{eq:spec_stabelized} the claim of the lemma follows if we show the uniform resolvent bound
$$
	\exists C>0: \quad
	\sup_{\RT(\lambda)\geq 0}\|(\mathcal{L} - \eps -\lambda)^{-1}(I-P_0)\|_{H^1 \to H^1} \leq C.
$$
This estimate is obtained in two steps.

\medskip

\emph{Step 1 (Uniform bound in $L^2$):} We show uniform resolvent estimates for the operator $(\mathcal{L} - \eps -\lambda)^{-1}(I-P_0):L^2 \to L^2$.
First note that the Hille-Yosida Theorem (\cite{Engel_Nagel} Theorem~3.8) ensures existence of constants $\gamma_1, C' > 0$ such that 
$$
	\sup_{\RT(\lambda)\geq \gamma_1}\|(\mathcal{L} - \eps -\lambda)^{-1}\|_{L^2 \to L^2} \leq C'.
$$
Moreover, using Theorem~\ref{thm:ex_bdd_resolvent} and Remark~\ref{rem:abst_resolvent_estimate} with $H= L^2(\R), A_\pm = -\partial_x^2 + \zeta - 2|u|^2, B = -u^2$ we find constants $\gamma_2, C''>0$ such that
$$
	\sup_{\RT(\lambda)\geq 0, |\IT(\lambda)|\geq \gamma_2}\|(\mathcal{L} - \eps -\lambda)^{-1}\|_{L^2 \to L^2} \leq C''.
$$
Finally, observe that $\lambda \mapsto (\mathcal{L} - \eps -\lambda)^{-1}(I-P_0)$ is an analytic function on $\{\lambda\in \C:\ \RT(\lambda) \geq 0\}$ and hence uniformly bounded on the compact set $\{\lambda \in \C:\ 0 \leq \RT(\lambda) \leq \gamma_1, |\IT(\lambda)|\leq \gamma_2\}$ with a bound $C''' \geq 0$. Thus for $C:= \max\{C',C'',C'''\}$ we have
$$
	\sup_{\RT(\lambda)\geq 0}\|(\mathcal{L} - \eps -\lambda)^{-1}(I-P_0)\|_{L^2 \to L^2} \leq C.
$$

\medskip

\emph{Step 2 (Uniform bound in $H^1$):} First, we establish a uniform bound in $H^2$ and then use an interpolation argument to find the desired $H^1$ bound. Note that there exist constants $c, \gamma \gg 1$ such that
$$
	\forall V = (v_1,v_2)^T \in (H^2(\R))^2: \quad \|V\|_{H^2} \leq c \|(L + J \eps + \gamma)V\|_{L^2}, \quad \|(JL - \eps )V\|_{L^2} \leq c\|V\|_{H^2},
$$
where we recall the decomposition $\mathcal{L} = JL$ from \eqref{def_lin_op}.
Hence we find for $\RT(\lambda)\geq 0,V \in H^2$,
\begin{align*}
	\|(JL &- \eps -\lambda)^{-1}(I -P_0)V\|_{H^2} \\
	&\leq c \|(L + J \eps + \gamma)(JL - \eps -\lambda)^{-1}(I-P_0)V\|_{L^2} \\
	&\leq c \|(JL - \eps)(JL - \eps -\lambda)^{-1}(I-P_0)V\|_{L^2} \\
	& \quad+ c \gamma \| (JL - \eps -\lambda)^{-1}(I-P_0)V\|_{L^2} \\
	&\leq c \|(JL - \eps -\lambda)^{-1}(I-P_0)(JL - \eps)V\|_{L^2} 
	+ c \gamma C \|V\|_{H^2} \\
	&\leq cC (c+\gamma) \|V\|_{H^2},
\end{align*}
by step 1, which implies the uniform bound
$$
	\sup_{\RT(\lambda)\geq 0}\|(JL - \eps -\lambda)^{-1}(I-P_0)\|_{H^2 \to H^2} \leq cC (c+\gamma).
$$
Interpolation of both estimates according to \cite{Lunardi} Theorem 2.6 yields
$$
	\sup_{\RT(\lambda)\geq 0}\|(JL - \eps -\lambda)^{-1}(I-P_0)\|_{H^1 \to H^1} \leq C
$$
for some constant $C>0$ and thus the claim follows.
\end{proof}

We still need to prove the uniform resolvent estimate for $\lambda \in \C$ with $\RT \lambda\geq 0$, $|\IT \lambda| \gg 1$ and conclude the nonlinear stability.

\subsubsection*{\bf High frequency resolvent estimates}
We establish uniform resolvent estimates for a family of operators in Hilbert spaces which generalizes the linearization operator $\mathcal{L}-\eps$. Our proof relies on techniques from \cite{GaeblerStanislavova2021} Section~3 where uniform resolvent estimates for NLS are considered. Similar resolvent estimates can also be found in \cite{BengelPelinovsky2023,PromislowKutz2000,StanislavovaStefanov2018}.

Let $H$ be a complex Hilbert space with scalar product $\langle \cdot, \cdot \rangle$ and norm $\|\cdot\| = \sqrt{\langle \cdot, \cdot \rangle}$. On $H \times H$ we consider the spectral problem
\begin{align}\label{eq:spec_prob}
	\left[
	\begin{pmatrix}
		-\iu & 0 \\ 0 & \iu
	\end{pmatrix}
	\begin{pmatrix}
		A_+ & B \\
		B^* & A_-
	\end{pmatrix}
	- \lambda
	\begin{pmatrix}
		I & 0 \\ 0 & I
	\end{pmatrix}
	\right]
	\begin{pmatrix}
		\phi_1 \\ \phi_2	
	\end{pmatrix}
	=
	\begin{pmatrix}
		\psi_1 \\ \psi_2
	\end{pmatrix},
\end{align}
where $\lambda = \lambda_r + \iu \lambda_i \in \C$ is a spectral parameter, $A_\pm: D \subset H \to H$ are closed self-adjoint linear operators with common domains $D = \Dom(A_\pm)$ which are either both bounded from below or from above by the bound $\gamma \in \R$, $B:H \to H$ is a bounded linear operator, $I:H \to H$ is the identity, $\phi_1,\phi_2 \in D$ and $\psi_1,\psi_2 \in H$. Under these assumptions the following theorem on uniform high-frequency resolvent estimates holds.

\begin{Theorem}\label{thm:ex_bdd_resolvent}
There exists $\rho = \rho(\gamma,\|B\|_{H \to H})>0$ such that for all $\lambda = \lambda_r + \iu \lambda_i \in \C$ with $|\lambda_i| \geq \rho$ and $\lambda_r \not = 0$ we have that for every given $(\psi_1,\psi_2) \in H \times H$ the spectral problem \eqref{eq:spec_prob} has a unique solution $(\phi_1, \phi_2) \in D \times D$ such that
$$
	\|\phi_1\| + \|\phi_2\| \lesssim |\lambda_r|^{-1} (\|\psi_1\| + \|\psi_2\|).
$$
\end{Theorem}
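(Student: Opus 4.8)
The plan is to prove the statement as a quantitative a priori estimate and then bootstrap it into existence and uniqueness. Write $\lambda = \lambda_r + \iu \lambda_i$, set $L' = \begin{pmatrix} A_+ & B \\ B^* & A_- \end{pmatrix}$ and $J = \operatorname{diag}(-\iu,\iu)$, and abbreviate $M(\lambda) := JL' - \lambda$. I would split the admissible region $\{\lambda_r \neq 0,\ |\lambda_i| \geq \rho\}$ into two regimes according to the size of $|\lambda_r|$, and throughout I treat the representative case that $A_\pm$ are bounded below by $\gamma$ and $\lambda_i > 0$; the bounded-above case and the case $\lambda_i < 0$ are symmetric after swapping the roles of $\phi_1$ and $\phi_2$.

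\emph{Large real part.} For $|\lambda_r| \geq 2\|B\|_{H\to H}$ I argue directly. The diagonal part $D(\lambda) := \operatorname{diag}(-\iu A_+ - \lambda,\ \iu A_- - \lambda)$ is invertible whenever $\lambda \notin \iu\R$, since $\mp\iu A_\pm$ are skew-adjoint with $\sigma(\mp\iu A_\pm) \subset \iu\R$; normality gives $\|D(\lambda)^{-1}\|_{H\times H \to H\times H} \leq 1/\operatorname{dist}(\lambda,\iu\R) = |\lambda_r|^{-1}$. Writing $M(\lambda) = D(\lambda) + N$ with $N$ the bounded off-diagonal part of norm $\|B\|$, a Neumann series in $D(\lambda)^{-1}N$ converges and yields both invertibility of $M(\lambda)$ and the bound $\|M(\lambda)^{-1}\| \leq (|\lambda_r|-\|B\|)^{-1} \lesssim |\lambda_r|^{-1}$. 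Note this regime does not even use $|\lambda_i| \geq \rho$.

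\emph{Bounded real part, high frequency} — the heart of the matter. Here $0 < |\lambda_r| < 2\|B\|$ and $|\lambda_i| \geq \rho$ with $\rho$ large. I test the two scalar equations of \eqref{eq:spec_prob} against $\phi_1$ and $\phi_2$. Putting $a_\pm = \langle A_\pm \phi_{1,2},\phi_{1,2}\rangle \in \R$ and $\beta = \langle B\phi_2,\phi_1\rangle$ (so that $\langle B^*\phi_1,\phi_2\rangle = \bar\beta$), the inner products read $-\iu(a_+ + \beta) - \lambda\|\phi_1\|^2 = \langle\psi_1,\phi_1\rangle$ and $\iu(\bar\beta + a_-) - \lambda\|\phi_2\|^2 = \langle\psi_2,\phi_2\rangle$. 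Taking real parts and subtracting, the coupling term $\IT\beta$ cancels and produces the clean identity
$$\lambda_r\big(\|\phi_2\|^2 - \|\phi_1\|^2\big) = \RT\langle\psi_1,\phi_1\rangle - \RT\langle\psi_2,\phi_2\rangle,$$
so that $|\lambda_r|\,\big|\|\phi_1\|^2 - \|\phi_2\|^2\big| \leq \|\psi_1\|\|\phi_1\| + \|\psi_2\|\|\phi_2\|$. Taking instead the imaginary part of the first equation and using $a_+ \geq \gamma\|\phi_1\|^2$ gives the high-frequency gain on the \emph{good} component,
$$(\gamma + \lambda_i)\|\phi_1\| \leq \|B\|\|\phi_2\| + \|\psi_1\|.$$
Choosing $\rho$ so large that $\|B\|/(\gamma+\lambda_i) \leq \tfrac12$ forces $\|\phi_1\| \leq \tfrac12\|\phi_2\| + \|\psi_1\|/(\gamma+\lambda_i)$, i.e. $\phi_1$ is genuinely subordinate to $\phi_2$. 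Feeding this into the difference identity makes $\|\phi_2\|^2 - \|\phi_1\|^2 \gtrsim \|\phi_2\|^2$ up to a data term, which turns the difference identity into a quadratic inequality $\tfrac12|\lambda_r|\|\phi_2\|^2 \lesssim (\|\psi_1\|+\|\psi_2\|)\|\phi_2\| + (\text{data})$. Because in this regime $|\lambda_r| < 2\|B\| \leq \gamma+\lambda_i$, solving the quadratic inequality shows every lower-order term is also $\lesssim |\lambda_r|^{-1}(\|\psi_1\|+\|\psi_2\|)$; together with the gain bound on $\phi_1$ this closes $\|\phi_1\| + \|\phi_2\| \lesssim |\lambda_r|^{-1}(\|\psi_1\| + \|\psi_2\|)$. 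I expect the main obstacle to be exactly this off-diagonal coupling $\beta$: the decisive point is that it cancels in the difference identity while the imaginary-part identity confines the good component, so the genuinely singular $|\lambda_r|^{-1}$ behaviour — reflecting proximity to the essential spectrum on $\iu\R$ — is first carried by $\phi_2$ and then transferred to $\phi_1$.

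\emph{Existence and uniqueness.} The a priori estimate immediately gives uniqueness, and since $M(\lambda)$ is closed it has closed range; reading the equations as $A_+\phi_1 = -\iu\lambda\phi_1 - \iu\psi_1 - B\phi_2$ (and symmetrically for $\phi_2$) upgrades any $H\times H$ solution to a $D\times D$ solution with controlled graph norm. For surjectivity I use a continuity argument: $M(\lambda)$ is invertible throughout the large-$|\lambda_r|$ regime by the Neumann series, and the a priori bound is locally uniform along any segment $\{\lambda_r = t,\ \lambda_i\ \text{fixed}\}$ bounded away from $t=0$; the method of continuity then propagates invertibility down to all $\lambda_r \neq 0$ with $|\lambda_i| \geq \rho$. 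Alternatively, the same estimate applied to the adjoint $M(\lambda)^* = -L'J - \bar\lambda$ — which has the identical block structure, since $(-\iu B)^* = \iu B^*$ — shows $\Ker(M(\lambda)^*) = \{0\}$, hence dense range, which with closed range gives surjectivity.
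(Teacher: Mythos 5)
Your proposal is correct, but it proves the theorem by a genuinely different route than the paper. The paper's proof is constructive: it multiplies the system by $\pm\iu$, solves the first equation for $\phi_1$ via the resolvent $(A_+ + \lambda_i - \iu\lambda_r)^{-1}$ (bounded by both $2/\lambda_i$ and $1/|\lambda_r|$ using the distance-to-spectrum lemma for self-adjoint operators), substitutes into the second equation, and—after the resolvent identity—analyzes the Schur-complement operator $\mathcal{A} = A_- - B^*(A_+ + \lambda_i)^{-1}B$, which is self-adjoint and hence satisfies $\|(\mathcal{A} - \lambda_i + \iu\lambda_r)^{-1}\| \leq |\lambda_r|^{-1}$; the leftover coupling term has norm $\mathcal{O}(\lambda_i^{-2})$ and is absorbed by a Neumann series. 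That single argument yields existence, uniqueness and the bound simultaneously, uniformly in all $\lambda_r \neq 0$, with no case split. You instead establish an a priori energy estimate—your identities check out: the coupling term $\IT\beta$ does cancel in the difference of real parts, and the imaginary part of the first equation does give $(\gamma+\lambda_i)\|\phi_1\| \leq \|B\|\|\phi_2\| + \|\psi_1\|$, after which the quadratic inequality closes precisely because $|\lambda_r| < 2\|B\| \leq \gamma + \lambda_i$ in that regime—and then you must bootstrap existence separately, which forces your two-regime splitting (Neumann series off the skew-adjoint diagonal for $|\lambda_r| \geq 2\|B\|$, then continuity or duality to propagate invertibility). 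Both existence mechanisms you sketch are sound; note the adjoint route works exactly because $(J L')^* = -L'J - $ conjugation swaps "bounded below" for "bounded above", a case the theorem's hypotheses deliberately cover. What each approach buys: the paper's Schur-complement argument (following the cited NLS resolvent technique) is shorter and gives the inverse explicitly; your energy method is more elementary and exposes the stability mechanism—the off-diagonal coupling is invisible to the difference identity while high frequency subordinates one component—at the cost of the extra functional-analytic bootstrap. One phrasing slip: closedness of $M(\lambda)$ alone does not give closed range; you need to say explicitly that closedness \emph{combined with} your a priori lower bound $\|M(\lambda)\Phi\| \gtrsim |\lambda_r|\,\|\Phi\|$ yields closed range (and likewise the graph-norm upgrade to $D \times D$ has harmless sign typos in the rearranged equation). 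These are cosmetic; the argument is complete.
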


\begin{Remark}\label{rem:abst_resolvent_estimate}
Theorem~\ref{thm:ex_bdd_resolvent} gives a uniform resolvent estimate for the linear operator
$$
	\begin{pmatrix}
		-\iu & 0 \\ 0 & \iu
	\end{pmatrix}
	\begin{pmatrix}
		A_+ & B \\
		B^* & A_-
	\end{pmatrix}: D \times D \to H \times H
$$
for high frequencies $\lambda \in \C$ such that $|\IT(\lambda)| \gg 1$, $\RT(\lambda) \not =0$. For $\RT(\lambda) = 0$ we cannot expect such an estimate to be true, since the intersection of the spectrum and the imaginary axis is typically non-empty.
\end{Remark}

\begin{Remark}
Theorem~\ref{thm:ex_bdd_resolvent} can be applied to different variants of LLE. Indeed, consider the extended LLE
$$
	\iu u_t =  \sum_{k =1}^{2n} d_k \left(\iu \partial_x \right)^k u + (\zeta(x) - \iu \mu) u - |u|^2 u  + \iu f(x),\quad (x,t) \in \Omega\times\R, 
$$
where $\Omega \in \{\R, \R / \Z \}$, $n \in \N$, $d_{2n}\not = 0$, $d_{2n-1},\dots,d_1 \in \R$, $\zeta \in L^\infty(\Omega,\R)$, $f \in H^2(\Omega,\C)$, and $\mu \geq 0$. A similar equation is studied in \cite{BengelPelinovsky2023,Gasmi_Kirn,Gelens2008}. Then, the linearization $\mathcal{L}: (H^{2n}(\Omega))^2 \to (L^2(\Omega))^2$ about a stationary solution $u=u(x)$ reads
$$
	\mathcal{L}:=
	\begin{pmatrix}
		- \iu & 0 \\ 0 & \iu
	\end{pmatrix}
	\begin{pmatrix}
		\sum_{k =1}^{2n} d_k \left(\iu \partial_x \right)^k + \zeta(x) - 2 |u|^2 & -u^2 \\
		-\bar{u}^2 & \sum_{k =1}^{2n} d_k \left(-\iu \partial_x \right)^k + \zeta(x) - 2 |u|^2
	\end{pmatrix} - \mu.
$$
If we set $A_\pm = \sum_{k =1}^{2n} d_k \left(\pm\iu \partial_x \right)^k + \zeta(x) - 2 |u|^2$, $B = -u^2$, and replaced $\lambda$ by $\lambda+\mu$, the associated spectral problem fits into the framework of Theorem~\ref{thm:ex_bdd_resolvent}. In particular, the uniform resolvent estimates can be used to study the dynamics of the extended LLE close to stationary waves.
\end{Remark}

We need the following property of self-adjoint operators.

\begin{Lemma*}[\cite{Kato}, Chapter~5, Section~5]
Let $A : \Dom(A) \subset H \to H$ be a self-adjoint operator and $\lambda \in \rho(A)$ be in the resolvent set of $A$. Then
$$
	\|(A-\lambda)^{-1}\|_{H \to H} = \frac{1}{\textup{dist}(\sigma(A),\lambda)}.
$$
\end{Lemma*}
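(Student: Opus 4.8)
The plan is to invoke the spectral theorem for the (possibly unbounded) self-adjoint operator $A$, which furnishes a projection-valued measure $E$ supported on $\sigma(A) \subset \R$ with $A = \int_{\sigma(A)} t \, dE(t)$, together with its Borel functional calculus $f \mapsto f(A)$. First I would observe that, since $\lambda \in \rho(A)$, the number $\delta := \textup{dist}(\sigma(A),\lambda)$ is strictly positive, so the function $r_\lambda(t) := (t-\lambda)^{-1}$ is bounded and continuous on $\sigma(A)$. Feeding $r_\lambda$ into the functional calculus produces a bounded operator $r_\lambda(A) = \int_{\sigma(A)} (t-\lambda)^{-1}\, dE(t)$, and from the pointwise identity $(t-\lambda)\cdot r_\lambda(t) = 1$ one checks that $r_\lambda(A) = (A-\lambda)^{-1}$ on $\Dom(A)$; this identifies the resolvent with an explicit element of the calculus.

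The norm is then read off from the isometry property of the functional calculus. For a bounded Borel function $f$ one has $\|f(A)\| = \|f\|_{L^\infty(E)}$, the essential supremum of $|f|$ with respect to the spectral measure $E$. Since the support of $E$ is precisely $\sigma(A)$ and $r_\lambda$ is continuous, this essential supremum coincides with the ordinary supremum over the spectrum, so that
$$
	\|(A-\lambda)^{-1}\| = \|r_\lambda(A)\| = \sup_{t \in \sigma(A)} \frac{1}{|t-\lambda|} = \frac{1}{\inf_{t\in\sigma(A)}|t-\lambda|} = \frac{1}{\textup{dist}(\sigma(A),\lambda)},
$$
which is the asserted identity.

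The step I expect to require the most care is the equality $\|f(A)\| = \|f\|_{L^\infty(E)}$ together with the identification $\textup{supp}\,E = \sigma(A)$; this is exactly where self-adjointness (hence a spectral measure supported on the real axis) enters, and it must be justified for unbounded $A$, as the operators $-\partial_x^2 + \zeta - 2|u|^2$ arising in the application are unbounded. An equivalent route that avoids the full functional-calculus norm formula is to prove the two inequalities separately. The bound $\|(A-\lambda)^{-1}\| \geq \textup{dist}(\sigma(A),\lambda)^{-1}$ holds for any closed operator, since the Neumann series shows the open ball of radius $\|(A-\lambda)^{-1}\|^{-1}$ about $\lambda$ lies in $\rho(A)$; the reverse bound is the genuinely self-adjoint ingredient and follows from the identity $\inf_{x\in\Dom(A),\|x\|=1}\|(A-\lambda)x\| = \textup{dist}(\sigma(A),\lambda)$, i.e.\ that for self-adjoint operators the reciprocal of the resolvent norm equals the minimum modulus of $A-\lambda$. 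Concretely, writing $\lambda = \lambda_r + \iu\lambda_i$ and using self-adjointness of $A-\lambda_r$ one obtains the Pythagorean splitting $\|(A-\lambda)x\|^2 = \|(A-\lambda_r)x\|^2 + \lambda_i^2\|x\|^2$, after which the real-shift estimate $\|(A-\lambda_r)x\| \geq \textup{dist}(\lambda_r,\sigma(A))\|x\|$ — itself a consequence of the spectral theorem — delivers the sharp constant.
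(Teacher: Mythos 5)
Your proof is correct, and since the paper states this lemma without proof (quoting it from \cite{Kato}), the relevant benchmark is the canonical argument there, which is exactly your route: identify $(A-\lambda)^{-1}$ with $r_\lambda(A)$ through the Borel functional calculus and use $\|f(A)\| = \sup_{t \in \sigma(A)}|f(t)|$ for continuous $f$, which rests on $\operatorname{supp} E = \sigma(A)$. Your fallback two-inequality argument is also sound: the cross term in $\|(A-\lambda)x\|^2 = \|(A-\lambda_r)x\|^2 + \lambda_i^2\|x\|^2$ vanishes because $\langle (A-\lambda_r)x, x\rangle \in \R$ by self-adjointness, and since $\sigma(A) \subset \R$ one has $\textup{dist}(\lambda,\sigma(A))^2 = \textup{dist}(\lambda_r,\sigma(A))^2 + \lambda_i^2$, so combining the real-shift estimate with the Neumann-series lower bound indeed delivers the sharp identity.
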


\begin{proof}[Proof of Theorem~\ref{thm:ex_bdd_resolvent}.]
We follow the strategy in \cite{GaeblerStanislavova2021} Section~3. Let us assume that $A_\pm$ are both bounded from below, i.e., $A_\pm \geq -\gamma$ for $\gamma>0$. The proof for the case that $A_\pm$ is bounded from above is similar. We write the spectral problem \eqref{eq:spec_prob} as the system
\begin{align}\label{eq:spec_sys}
	\left\{
	\begin{array}{l}
		(A_+ + \lambda_i-\iu \lambda_r) \phi_1 + B  \phi_2  = \psi_1, \\
		(A_- - \lambda_i+\iu \lambda_r) \phi_2 + B^*  \phi_1 = \psi_2,
	\end{array}\right.
\end{align}
where we have replaced $\iu\psi_1$ by $\psi_1$ and  $-\iu\psi_2$ by $\psi_2$. Now, consider the case $\lambda_i> 0$. By the previous lemma we infer that for all $\lambda_i \geq 2\gamma$ we find $-\lambda_i, -\lambda_i + \iu \lambda_r \in \rho(A_+)$ and
$$
	\|(A_+ + \lambda_i)^{-1}\|_{H \to H}, \|(A_+ + \lambda_i - \iu \lambda_r)^{-1}\|_{H \to H} \leq \frac{2}{\lambda_i}, \quad \|(A_+ + \lambda_i - \iu \lambda_r)^{-1}\|_{H \to H} \leq \frac{1}{|\lambda_r|}.
$$
In particular, we can solve the first equation in \eqref{eq:spec_sys} for $\phi_1$:
$$
	\phi_1 =- (A_+ + \lambda_i - \iu \lambda_r)^{-1} B \phi_2 + (A_+ + \lambda_i - \iu \lambda_r)^{-1} \psi_1
$$
and substituting the expression for $\phi_1$ into the second equation amounts to
\begin{align*}
	(A_- - \lambda_i + \iu \lambda_r ) \phi_2 - B^* (A_+ + \lambda_i - \iu \lambda_r)^{-1} B \phi_2
	= -B^* (A_+ + \lambda_i - \iu \lambda_r)^{-1} \psi_1 + \psi_2.
\end{align*}
By the resolvent identity we find
\begin{align*}
	(A_+ + \lambda_i - \iu \lambda_r)^{-1} - (A_+ + \lambda_i )^{-1}
	= \iu \lambda_r (A_+ + \lambda_i - \iu \lambda_r)^{-1}(A_+ + \lambda_i )^{-1}
\end{align*}
and consequently
\begin{align*}
	\big(A_- - B^* (A_+ + \lambda_i )^{-1} B - \lambda_i + \iu \lambda_r \big) \phi_2
	- \iu \lambda_r  B^* (A_+ + \lambda_i - \iu \lambda_r)^{-1}(A_+ + \lambda_i )^{-1} B\phi_2 \quad  \\
	= - B^* (A_+ + \lambda_i - \iu \lambda_r)^{-1} \psi_1 + \psi_2.
\end{align*}
The operator $B^* (A_+ + \lambda_i )^{-1} B :H \to H$ is bounded and symmetric and thus
$$
	\mathcal{A} := A_- - B^* (A_+ + \lambda_i )^{-1} B : D \to H
$$
is self-adjoint. From the previous lemma on resolvent bounds of self-adjoint operators we have $\|(\mathcal{A} - \lambda_i + \iu \lambda_r)^{-1}\|_{H \to H} \leq |\lambda_r|^{-1}$. Hence, we infer
\begin{align*}
	&\big[I - \iu \lambda_r (\mathcal{A} - \lambda_i + \iu \lambda_r)^{-1}  B^* (A_+ + \lambda_i - \iu \lambda_r)^{-1}(A_+ + \lambda_i )^{-1} B\big] \phi_2 \\
	&\quad = -(\mathcal{A} - \lambda_i + \iu \lambda_r)^{-1} B^* (A_+ + \lambda_i - \iu \lambda_r)^{-1} \psi_1 + (\mathcal{A} - \lambda_i + \iu \lambda_r)^{-1}\psi_2.
\end{align*}
Now observe that
\begin{align*}
	\|\iu \lambda_r (\mathcal{A} - \lambda_i + \iu \lambda_r)^{-1}  B^* (A_+ + \lambda_i - \iu \lambda_r)^{-1}(A_+ + \lambda_i )^{-1} B\|_{H \to H}
	\lesssim \lambda_i^{-2}.
\end{align*}
Thus for $\lambda_i>0$ sufficiently large the operator
$$
	I - \iu \lambda_r (\mathcal{A} - \lambda_i + \iu \lambda_r)^{-1}  B^* (A_+ + \lambda_i - \iu \lambda_r)^{-1}(A_+ + \lambda_i )^{-1} B: H \to H
$$
is invertible as a small perturbation of the identity and 
$$
	\Big\|\big[I - \iu \lambda_r (\mathcal{A} - \lambda_i + \iu \lambda_r)^{-1}  B^* (A_+ + \lambda_i - \iu \lambda_r)^{-1}(A_+ + \lambda_i )^{-1} B\big]^{-1} \Big\|_{H \to H} \leq \frac{1}{2}
$$
uniformly in $\lambda_i \gg 1$. For this reason and using $\|(\mathcal{A} - \lambda_i + \iu \lambda_r)^{-1}\|_{H \to H} \leq |\lambda_r|^{-1}$ we find
$$
	\|\phi_2\| \lesssim |\lambda_r|^{-1} (\|\psi_1\| + \|\psi_2\|)
$$
as well as
$$
	\|\phi_1\| \lesssim |\lambda_r|^{-1} (\|\psi_1\| + \|\psi_2\|)
$$
and both estimates are independent of $\lambda_i$ provided $\lambda_i \gg 1$. In summary for $\lambda = \lambda_r + \iu\lambda_i \in \C$ with $\lambda_r \not = 0$ and $\lambda_i \gg 1$ the resolvent in \eqref{eq:spec_prob} exists and is uniformly bounded in $\lambda_i$. In the same way one can show the existence and boundedness of the resolvent for $ \lambda_i \ll -1$ and the claim follows.
\end{proof}

\subsubsection*{\bf Asymptotical orbital stability}
The proof of the nonlinear stability follows as a direct consequence of the exponential decay estimate in Lemma~\ref{lem:lin_exp_stability} and Theorem~4.3.5 in \cite{KapProm} applied to \eqref{eq:dyn_pert}. Notice that the nonlinearity in \eqref{eq:dyn_pert} is locally Lipschitz, so that all assumptions of the theorem in \cite{KapProm} are satisfied. In conclusion, the solitary wave bifurcating from $\phi_{\theta_0}$ with $\sin\theta_0>0$ is asymptotically orbitally stable against localized perturbations in $H^1(\R)$ and the proof of Theorem~\ref{thm:nonlin_stab} is completed.

\section*{Acknowledgments} 
The author is grateful to W.~Reichel and B.~de Rijk for their valuable support during various stages of this work.
This research was funded by the Deutsche Forschungsgemeinschaft (DFG, German Research Foundation) -- Project-ID 258734477 -- SFB 1173.

\bibliographystyle{siam}	
\bibliography{bibliography}
	
\end{document}